\setlist{nosep}
\theoremstyle{plain}
\newtheorem{theorem}{Theorem}
\newtheorem{lemma}[theorem]{Lemma}
\newtheorem*{lemma*}{Lemma}
\newtheorem{proposition}[theorem]{Proposition}
\newtheorem*{proposition*}{Proposition}
\newtheorem{corollary}[theorem]{Corollary}
\newtheorem*{corollary*}{Corollary}
\newtheorem*{observation*}{Observation}
\newtheorem*{example*}{Example}
\theoremstyle{definition}
\newtheorem{definition}{Definition}
\theoremstyle{remark}
\newtheorem{remark}[theorem]{Remark}
\newtheorem*{remark*}{Remark}
\theoremstyle{claim}
\newcommand{\cN}{\mathcal{N}}
\newcommand{\cK}{\mathcal{K}}
\newcommand{\cF}{\mathcal{F}}
\newcommand{\R}{\mathbb{R}}
\newcommand{\bx}{\bar{x}}
\newcommand{\bs}{\bar{s}}
\newcommand{\bF}{\bar{F}}
\newcommand{\bg}{\bar{g}}
\newcommand{\bH}{\bar{H}}
\newcommand{\bK}{\bar{K}}
\newcommand{\bnu}{\bar{\nu}}
\newcommand{\af}{\alpha}
\newcommand{\dx}{\Delta_{\bx}}
\newcommand{\dy}{\Delta_y}
\newcommand{\ds}{\Delta_{\bs}}
\newcommand{\dz}{\Delta_z}
\newcommand{\kx}{k_{\bx}}
\newcommand{\ks}{k_{\bs}}
\newcommand{\la}{\langle}
\newcommand{\ra}{\rangle}
\newcommand{\bse}{\begin{eqnarray*}}
\newcommand{\ese}{\end{eqnarray*}}
\newcommand{\revise}[1]{{#1}}
\begin{document}
\title{On ``A Homogeneous Interior-Point Algorithm for Non-Symmetric Convex Conic Optimization''}
\author{D{\'a}vid Papp\thanks{North Carolina State University, Department of Mathematics. Email: \texttt{dpapp@ncsu.edu}} \
and
Sercan Y{\i}ld{\i}z\thanks{University of North Carolina at Chapel Hill,
Department of Statistics and Operations Research;
Statistical and Applied Mathematical Sciences Institute.}}
\date{\today\footnote{
	This material is based upon work supported by the National Science Foundation under Grant No.~DMS-1719828. Additionally, this material was based upon work partially supported by the National Science Foundation under Grant No.~DMS-1638521 to the Statistical and Applied Mathematical Sciences Institute. Any opinions, findings, and conclusions or recommendations expressed in this material are those of the authors and do not necessarily reflect the views of the National Science Foundation.}}
\maketitle

\abstract{
In a recent paper, Skajaa and Ye proposed a homogeneous primal-dual interior-point method for non-symmetric conic optimization. The authors showed that their algorithm converges to $\varepsilon$-accuracy in $O(\sqrt{\nu}\log \varepsilon^{-1})$ iterations, where $\nu$ is the complexity parameter associated with a barrier function for the primal cone, and thus achieves the best-known iteration complexity for this class of problems. However, an earlier result from the literature was used incorrectly in the proofs of two intermediate lemmas in that paper. In this note, we propose new proofs of these results, allowing the same complexity bound to be established.
}

\medskip
\noindent\textbf{Keywords:} Convex optimization, Non-symmetric conic optimization, Homogeneous self-dual model, Interior-point algorithm

\medskip
\noindent\textbf{Mathematics Subject Classification:} 90C25, 90C51, 90C30

\section{Introduction}
In a recent paper, Skajaa and Ye \cite{SY2015} proposed a homogeneous primal-dual interior-point method for conic optimization problems that have the form
\vspace{-1.5em}
\[
\begin{minipage}[t]{0.45\linewidth}\centering
\begin{alignat*}{3}
\text{(P):}\;   &\min_x\;           &&c^\top x\\
                &\;\text{s.t.}\;    &&Ax=b\\
                &                   &&x\in K
\end{alignat*}
\end{minipage}
\begin{minipage}[t]{0.45\linewidth}\centering
\begin{alignat*}{3}
\text{(D):}\;   &\max_{s,y}\;       &&b^\top y\\
                &\;\text{s.t.}\;    &&A^\top y+s=c\\
                &                   &&s\in K^*,\; y\in\R^m,
\end{alignat*}
\end{minipage}
\]
where $K\subset\R^n$ and $K^*\subset\R^n$ are a dual pair of proper (closed, convex, pointed, and full-dimensional) cones, $A$ is an $m\times n$ real matrix of full row rank, and $b$ and $c$ are real vectors of appropriate dimensions. In contrast to the primal-dual algorithms for non-symmetric conic optimization such as \cite{NTY1999,N2012}, this breakthrough algorithm requires only an efficiently computable barrier function of the primal cone $K$ and assumes nothing about the tractability of barrier functions for $K^*$.

The authors proved that their algorithm converges to $\varepsilon$-accuracy in $O(\sqrt{\nu}\log \varepsilon^{-1})$ iterations, where $\nu$ is the complexity parameter associated with a barrier function for the cone $K$, and thus achieves the best-known iteration complexity for this class of problems. Unfortunately, however, an earlier result from the literature was used incorrectly in the proofs of the two key intermediate results in that paper, Lemmas~5 and 6, and hence the published analysis is incorrect. This corrigendum provides precise versions of these lemmas with corrected proofs. In particular, this shows that the main results of \cite{SY2015} still hold, and allows the same complexity bound to be established. We also suggest other improvements to the previous analysis.

The remainder of this note is organized as follows: In Section~\ref{sec:pre}, we recall the relevant background from the literature and derive some auxiliary results that will be used throughout the paper. In Section~\ref{sec:algo}, we present our new proofs of Lemmas~5 and 6 from \cite{SY2015}. In Appendix~A, we provide the proofs that are omitted from the main text. For completeness, in Appendix~B, we give counterexamples that demonstrate that the original proofs of Lemmas~5 and 6 were indeed incorrect.

\section{Preliminaries}\label{sec:pre}

In this section, we recall important definitions and results about three notions that are central to interior-point method theory: self-concordance, logarithmic homogeneity, and conjugacy. Our presentation is based to a large extent on the excellent textbook \cite{Ren2001}. We also derive some preliminary results that are used in our later analysis.

We introduce our notation before we proceed. The standard inner product on the space $\R^n$ is denoted $\la\cdot,\cdot\ra$; for every $x_1,x_2\in\R^n$, one has $\la x_1,x_2\ra=x_1^\top x_2$. This inner product equips the space $\R^n$ with the norm $\|x\|:=\sqrt{\la x,x\ra}$. 
Throughout this note, $K\subset\R^n$ refers to a proper cone. Its dual cone is $K^*:=\{s\in\R^n:\,\la x,s\ra\geq 0\;\forall x\in K\}$. The notation $C^\circ$ represents the interior of a set $C\subset\R^n$.
Let $F:K^\circ\to\R$ be a twice continuously differentiable function. We use $g(x)$ and $H(x)$ to refer to the gradient and Hessian of $F$ at a point $x\in K^\circ$ with respect to $\la\cdot,\cdot\ra$. We assume that $H(x)$ is positive definite for all $x\in K^\circ$. In particular, $F$ is strictly convex. The Newton step for $F$ at a point $x\in K^\circ$ is defined as $n(x):=-H(x)^{-1}g(x)$.

Every $n\times n$ real symmetric positive definite matrix $S$ gives rise to an inner product $\la\cdot,\cdot\ra_S$ where $\la x_1,x_2\ra_S=\la x_1,Sx_2\ra$ for $x_1,x_2\in\R^n$.
For every $x\in K^\circ$, the inner products $\la\cdot,\cdot\ra_{H(x)}$ and $\la\cdot,\cdot\ra_{H(x)^{-1}}$ equip the space $\R^n$ with the local norms $\|u\|_x:=\|H(x)^{1/2}u\|$ and $\|u\|_x^*:=\|H(x)^{-1/2}u\|$.
These two norms are related through the identity $\|u\|_x^*=\|H(x)^{-1}u\|_x$. Fur\-ther\-more, for every $x,v\in K^\circ$ and $u\in\R^n$, one has $\|u\|_x=\|H(x)^{1/2}u\|=\|H(v)^{-1/2}H(x)^{1/2}u\|_v$. For every $x\in K^\circ$ and $u_1,u_2\in\R^n$, the Cauchy--Schwarz inequality states
\begin{equation}\label{eq:CS}
|\la u_1,u_2\ra_{H(x)}|\leq\|u_1\|_x\|u_2\|_x.
\end{equation}
The norm $\|\cdot\|_x$ induces an operator norm on the space $\R^{n\times n}$ defined as $\|A\|_x:=\max\{\|Au\|_x:\,\|u\|_x\leq 1\}$.
For every $x\in K^\circ$, $u\in\R^n$, and $A\in\R^{n\times n}$, one has
\begin{align}
\|Au\|_x&\leq\|A\|_x\|u\|_x\quad\text{and}\label{eq:lemma1i}\\
\la u,Au\ra_{H(x)}&\overset{\eqref{eq:CS}}{\leq}\|u\|_x\|Au\|_x
\overset{\eqref{eq:lemma1i}}{\leq}\|A\|_x\|u\|_x^2.\label{eq:lemma1ii}
\end{align}


\subsection{Self-Concordance}

In this section, we present useful results about self-concordant functions \cite{NN1994}. We adopt the definition of self-concordance proposed in \cite{Ren2001}. For any $x\in K^\circ$, let $B_x(u,r):=\{v\in\R^n:\,\|v-u\|_x<r\}$ denote the open ball of radius $r>0$ centered at $u\in\R^n$ with respect to the local norm $\|\cdot\|_x$.

\begin{definition}[see Section 2.2.1 in \cite{Ren2001}]
A function $F:K^\circ\to\R$ is said to be \emph{(strongly nondegenerate) self-concordant} if for all $x\in K^\circ$, one has $B_x(x,1)\subset K^\circ$, and for all $v\neq 0$ and $u\in B_x(x,1)$, one has
\begin{equation*}\label{eq:SC}
1-\|u-x\|_x\leq\frac{\|v\|_u}{\|v\|_x}\leq\frac{1}{1-\|u-x\|_x}.
\end{equation*}
\end{definition}

The next two results describe known properties of self-concordant functions.

\begin{theorem}[see Theorem 2.2.1 in \cite{Ren2001}]\label{thm:SCequi}
Assume $F:K^\circ\to\R$ has the property that $B_x(x,1)\subset K^\circ$ for all $x\in K^\circ$. Then $F$ is self-concordant if and only if for all $x\in K^\circ$ and $u\in B_x(x,1)$, one has
\begin{equation}\label{eq:SCequi1}
\|H(x)^{-1}H(u)\|_x,\|H(u)^{-1}H(x)\|_x\leq(1-\|u-x\|_x)^{-2}.
\end{equation}
Likewise, $F$ is self-concordant if and only if for all $x\in K^\circ$ and $u\in B_x(x,1)$, one has
\begin{equation}\label{eq:SCequi2}
\|I-H(x)^{-1}H(u)\|_x,\|I-H(u)^{-1}H(x)\|_x\leq(1-\|u-x\|_x)^{-2}-1.
\end{equation}
\end{theorem}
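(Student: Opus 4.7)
The plan is to reduce both inequalities \eqref{eq:SCequi1} and \eqref{eq:SCequi2} to statements about the spectrum of the operator $M := H(x)^{-1}H(u)$, and then match those statements against the definition of self-concordance via an optimization over directions $v$. The core observation is that, although $H(x)^{-1}H(u)$ need not be symmetric in the standard inner product, it \emph{is} self-adjoint with respect to $\la\cdot,\cdot\ra_{H(x)}$: for any $w_1,w_2\in\R^n$,
\[
\la Mw_1,w_2\ra_{H(x)}=\la H(u)w_1,w_2\ra=\la w_1,H(u)w_2\ra=\la w_1,Mw_2\ra_{H(x)}.
\]
Hence $M$ is diagonalizable with positive real eigenvalues, and its operator norm with respect to $\|\cdot\|_x$ coincides with $\lambda_{\max}(M)$. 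The same reasoning applies to $M^{-1}=H(u)^{-1}H(x)$.

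For the first equivalence, I would note that every positive eigenvalue $\lambda$ of $M$ is realized as a ratio $\|v\|_u^2/\|v\|_x^2$ (with $v$ a generalized eigenvector), and conversely every such ratio lies between $\lambda_{\min}(M)$ and $\lambda_{\max}(M)$. Consequently,
\[
\|H(x)^{-1}H(u)\|_x=\sup_{v\neq 0}\frac{\|v\|_u^2}{\|v\|_x^2},\qquad
\|H(u)^{-1}H(x)\|_x=\sup_{v\neq 0}\frac{\|v\|_x^2}{\|v\|_u^2}.
\]
The self-concordance inequality squared reads $(1-\|u-x\|_x)^2 \le \|v\|_u^2/\|v\|_x^2 \le (1-\|u-x\|_x)^{-2}$ for all $v\neq 0$, which is manifestly equivalent to both suprema being bounded by $(1-\|u-x\|_x)^{-2}$. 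This establishes (SC) $\Leftrightarrow$ \eqref{eq:SCequi1}.

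For the second equivalence, observe that $I-M$ is also self-adjoint with respect to $\la\cdot,\cdot\ra_{H(x)}$, so $\|I-H(x)^{-1}H(u)\|_x=\max_\lambda|1-\lambda|$ over eigenvalues $\lambda$ of $M$. Writing $t:=\|u-x\|_x\in[0,1)$, the self-concordance bounds on these eigenvalues translate into $|1-\lambda|\le\max\{1-(1-t)^2,\,(1-t)^{-2}-1\}$, and a direct algebraic check (multiplying by $(1-t)^2\le 1$) shows $(1-t)^{-2}-1\ge 1-(1-t)^2$, so the right-hand side of \eqref{eq:SCequi2} is the binding bound. This proves \eqref{eq:SCequi1} $\Rightarrow$ \eqref{eq:SCequi2}; for the converse I would recover \eqref{eq:SCequi1} from \eqref{eq:SCequi2} by the triangle inequality $\|M\|_x\le 1+\|I-M\|_x\le (1-t)^{-2}$ and analogously for $M^{-1}$. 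The same argument applied to $I-M^{-1}$ handles the second half of \eqref{eq:SCequi2}.

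The main obstacle is the spectral identification of $\|H(x)^{-1}H(u)\|_x$: everything else follows formally once one recognizes that this operator (and its shift by $I$) is self-adjoint in the correct inner product and its spectrum is sandwiched exactly by the self-concordance inequality. A minor subtlety is that for the implication \eqref{eq:SCequi2} $\Rightarrow$ (SC) one must be careful to extract both the upper and lower eigenvalue bounds on $M$, which is why the statement of \eqref{eq:SCequi2} includes the bound on $\|I-H(u)^{-1}H(x)\|_x$ as well as on $\|I-H(x)^{-1}H(u)\|_x$.
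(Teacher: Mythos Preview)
The paper does not supply its own proof of this theorem: it is quoted verbatim from Renegar's monograph (Theorem~2.2.1 in \cite{Ren2001}) and used as a black box throughout the rest of the note. So there is no in-paper argument to compare against.

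That said, your proposal is correct and is essentially the standard argument. The crucial step is exactly the one you identify: $M=H(x)^{-1}H(u)$ is self-adjoint and positive in the inner product $\la\cdot,\cdot\ra_{H(x)}$, so $\|M\|_x=\lambda_{\max}(M)$, $\|M^{-1}\|_x=\lambda_{\min}(M)^{-1}$, and the Rayleigh quotient of $M$ at $v$ equals $\|v\|_u^2/\|v\|_x^2$. With that identification, \eqref{eq:SCequi1} is literally a restatement of the two-sided self-concordance inequality squared. Your handling of \eqref{eq:SCequi2} is also sound: the forward direction follows from the spectral bound $|1-\lambda|\le\max\{1-(1-t)^2,(1-t)^{-2}-1\}=(1-t)^{-2}-1$, and the reverse from $\|M^{\pm 1}\|_x\le 1+\|I-M^{\pm 1}\|_x$. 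The remark about needing both halves of \eqref{eq:SCequi2} to recover the lower eigenvalue bound is well placed; without the bound on $\|I-M^{-1}\|_x$, the inequality $\|I-M\|_x\le(1-t)^{-2}-1$ alone would only force $\lambda_{\min}(M)\ge 2-(1-t)^{-2}$, which can be negative and is therefore useless.
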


\begin{theorem}[see Theorem 2.2.4 in \cite{Ren2001}]\label{thm:renegarOrg}
Let $F:K^\circ\to\R$ be a self-concordant function and $x\in K^\circ$ be such that $\|n(x)\|_x<1$. Define $x^+:=x+n(x)$. Then
\begin{equation*}
\|n(x^+)\|_{x^+}\leq\left(\frac{\|n(x)\|_x}{1-\|n(x)\|_x}\right)^2.
\end{equation*}
\end{theorem}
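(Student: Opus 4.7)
My plan is to follow the classical derivation of the quadratic convergence of Newton's method for self-concordant functions, using the integral representation of $g(x^+)$ together with the two self-concordance bounds packaged in Theorem~\ref{thm:SCequi}. Set $\alpha:=\|n(x)\|_x$, so that $\alpha<1$ and $\|x^+-x\|_x=\alpha$, which guarantees $x^+\in B_x(x,1)\subset K^\circ$ and hence $n(x^+)$ is well defined. The goal is to bound $\|n(x^+)\|_{x^+}=\|H(x^+)^{-1/2}g(x^+)\|$, which I will do by first computing $g(x^+)$ explicitly and then performing a change of local norm.

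By the fundamental theorem of calculus applied to $t\mapsto g(x+tn(x))$ and the Newton identity $g(x)=-H(x)n(x)$, with $u_t:=x+tn(x)$ one has
\begin{equation*}
g(x^+)=g(x)+\int_0^1 H(u_t)n(x)\,dt=\int_0^1\big[H(u_t)-H(x)\big]n(x)\,dt.
\end{equation*}
Applying $H(x^+)^{-1/2}$ and using the triangle inequality under the integral gives
\begin{equation*}
\|n(x^+)\|_{x^+}\le\int_0^1 \big\|H(x^+)^{-1/2}[H(u_t)-H(x)]n(x)\big\|\,dt.
\end{equation*}

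To estimate the integrand, I would factor
\begin{equation*}
H(x^+)^{-1/2}[H(u_t)-H(x)]n(x)=\underbrace{H(x^+)^{-1/2}H(x)^{1/2}}_{(a)}\cdot\underbrace{H(x)^{-1/2}[H(u_t)-H(x)]H(x)^{-1/2}}_{(b)}\cdot\underbrace{H(x)^{1/2}n(x)}_{(c)}
\end{equation*}
and bound each factor in the standard Euclidean operator norm. Factor $(c)$ has Euclidean norm exactly $\alpha$. Factor $(b)$ is symmetric, and is similar (via $H(x)^{-1/2}$) to $H(x)^{-1}H(u_t)-I$, which is self-adjoint with respect to $\la\cdot,\cdot\ra_{H(x)}$; its Euclidean operator norm therefore equals $\|I-H(x)^{-1}H(u_t)\|_x$, bounded by $(1-t\alpha)^{-2}-1$ via \eqref{eq:SCequi2}, since $\|u_t-x\|_x=t\alpha<1$. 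For factor $(a)$, the identity $\|H(x^+)^{-1/2}H(x)^{1/2}\|_{op}^2=\|H(x^+)^{-1}H(x)\|_x$ (both equal the largest eigenvalue of $H(x^+)^{-1}H(x)$) combined with \eqref{eq:SCequi1} gives $\|H(x^+)^{-1/2}H(x)^{1/2}\|_{op}\le(1-\alpha)^{-1}$.

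Combining the three factor bounds and using $\int_0^1[(1-t\alpha)^{-2}-1]\,dt=\alpha/(1-\alpha)$ yields
\begin{equation*}
\|n(x^+)\|_{x^+}\le\frac{1}{1-\alpha}\cdot\alpha\cdot\frac{\alpha}{1-\alpha}=\left(\frac{\alpha}{1-\alpha}\right)^2,
\end{equation*}
which is the desired inequality. I expect the main subtlety to lie not in the integral computation but in the bookkeeping that converts the Euclidean operator-norm bounds on factors $(a)$ and $(b)$ into the local-norm quantities controlled by Theorem~\ref{thm:SCequi}; in particular, justifying that $\|H(x^+)^{-1/2}H(x)^{1/2}\|_{op}^2=\|H(x^+)^{-1}H(x)\|_x$ and that the operator norm of the symmetrized factor $(b)$ equals $\|I-H(x)^{-1}H(u_t)\|_x$ requires some care with self-adjointness with respect to the inner products $\la\cdot,\cdot\ra_{H(x)}$.
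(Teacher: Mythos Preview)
Your proof is correct and follows essentially the same route as the paper. The paper does not prove this theorem separately but proves the damped generalization (Theorem~\ref{thm:renegarNew}) in Appendix~A; specializing that argument to $\alpha=1$ yields exactly your computation: the same integral representation $g(x^+)=\int_0^1[H(u_t)-H(x)]n(x)\,dt$, the same use of \eqref{eq:SCequi2} to bound the Hessian difference, and the same use of \eqref{eq:SCequi1} to change from $\|\cdot\|_x^*$ to $\|\cdot\|_{x^+}^*$. The only cosmetic difference is that the paper first separates $\|n(x^+)\|_{x^+}\le(1-\alpha)^{-1}\|g(x^+)\|_x^*$ and then bounds $\|g(x^+)\|_x^*$ via the integral, whereas you carry $H(x^+)^{-1/2}$ inside the integral and factor all three pieces at once; the bookkeeping you flag about identifying Euclidean operator norms with the local-norm quantities in Theorem~\ref{thm:SCequi} is handled in the paper by the inequality \eqref{eq:lemma1ii}.
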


The next theorem generalizes Theorem~\ref{thm:renegarOrg} to the case where the Newton step is damped. Our proof uses the outline of the proof of Theorem 2.2.4 in \cite{Ren2001} and is provided in Appendix~A for completeness.

\begin{theorem}\label{thm:renegarNew}
Let $F:K^\circ\to\R$ be a self-concordant function and $x\in K^\circ$. Choose $x^+:=x+\af n(x)$ such that $0\leq\af\leq 1$ satisfies $\af\|n(x)\|_x<1$. Then
\begin{equation}\label{eq:renegarNew}
\|n(x^+)\|_{x^+}\leq\left(\frac{\af\|n(x)\|_x}{1-\af\|n(x)\|_x}\right)^2+\frac{(1-\af)\|n(x)\|_x}{1-\af\|n(x)\|_x}.
\end{equation}
\end{theorem}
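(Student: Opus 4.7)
My plan follows the outline of Renegar's proof of Theorem~\ref{thm:renegarOrg}, but tracks the damping parameter $\alpha$ through the estimates. Write $\beta := \|n(x)\|_x$ and $y(t) := x + t\alpha n(x)$ for $t\in[0,1]$, so that $y(0)=x$ and $y(1)=x^+$, and $\|y(t)-x\|_x = t\alpha\beta \le \alpha\beta < 1$ by hypothesis; in particular all $y(t)$ lie in $B_x(x,1)\subset K^\circ$.

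The starting point is the fundamental theorem of calculus applied to $t\mapsto g(y(t))$, which gives
\begin{equation*}
g(x^+) \;=\; g(x) + \alpha\int_0^1 H(y(t))\,n(x)\,dt.
\end{equation*}
Using $g(x) = -H(x)n(x)$, I rewrite this in the key decomposition
\begin{equation*}
g(x^+) \;=\; -(1-\alpha)H(x)n(x) \;+\; \alpha\int_0^1 \bigl[H(y(t))-H(x)\bigr]n(x)\,dt,
\end{equation*}
which cleanly separates the contribution due to damping (first term, vanishing at $\alpha=1$) from the contribution due to the curvature change of $F$ along the step (second term, which drives the familiar quadratic convergence estimate).

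Next, since $\|n(x^+)\|_{x^+} = \|g(x^+)\|_{x^+}^*$, I take the dual local norm at $x^+$ and apply the triangle inequality. To pass from $\|\cdot\|_{x^+}^*$ back to $\|\cdot\|_x^*$, I use the self-concordance bound: for any $u\in\R^n$, $\|u\|_{x^+}^* \le \|u\|_x^*/(1-\|x^+-x\|_x) = \|u\|_x^*/(1-\alpha\beta)$ (this is the dual form of the defining inequality in Definition~1, and follows by duality of norms together with $\|x^+-x\|_x = \alpha\beta < 1$). Applied to the first term this yields
\begin{equation*}
(1-\alpha)\|H(x)n(x)\|_{x^+}^* \;\le\; \frac{(1-\alpha)\|H(x)n(x)\|_x^*}{1-\alpha\beta} \;=\; \frac{(1-\alpha)\beta}{1-\alpha\beta},
\end{equation*}
using the identity $\|H(x)n(x)\|_x^* = \|n(x)\|_x$. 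For the integrand in the second term, I combine the identity $\|w\|_x^* = \|H(x)^{-1}w\|_x$ with \eqref{eq:lemma1i} and Theorem~\ref{thm:SCequi} (inequality \eqref{eq:SCequi2}) to obtain
\begin{equation*}
\bigl\|[H(y(t))-H(x)]n(x)\bigr\|_x^* \;=\; \bigl\|[H(x)^{-1}H(y(t))-I]\,n(x)\bigr\|_x \;\le\; \bigl[(1-t\alpha\beta)^{-2}-1\bigr]\beta.
\end{equation*}

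Finally, I carry out the one-dimensional integral
\begin{equation*}
\int_0^1 \bigl[(1-t\alpha\beta)^{-2}-1\bigr]\,dt \;=\; \frac{1}{1-\alpha\beta}-1 \;=\; \frac{\alpha\beta}{1-\alpha\beta},
\end{equation*}
which when multiplied by $\alpha\beta/(1-\alpha\beta)$ (from the factor $\alpha$ and the $1/(1-\alpha\beta)$ norm-conversion factor) produces exactly $\bigl(\alpha\beta/(1-\alpha\beta)\bigr)^2$. Adding the two bounds gives \eqref{eq:renegarNew}. The only subtle step is the dual-norm comparison $\|u\|_{x^+}^* \le \|u\|_x^*/(1-\alpha\beta)$; everything else is routine. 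I do not anticipate any real obstacle, since Renegar's argument for the full step $\alpha=1$ already contains all the necessary ingredients and the damping only introduces the explicit linear term that appears in the statement.
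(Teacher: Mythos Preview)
Your proof is correct and follows essentially the same route as the paper's: the same decomposition of $g(x^+)$ into the damping term $-(1-\alpha)H(x)n(x)$ and the curvature-change integral, the same use of \eqref{eq:SCequi2} to bound the integrand, and the same elementary integral. The only cosmetic difference is that the paper first reduces $\|n(x^+)\|_{x^+}$ to $\|g(x^+)\|_x^*/(1-\alpha\beta)$ via the operator-norm bound \eqref{eq:SCequi1} and then bounds $\|g(x^+)\|_x^*$, whereas you apply the equivalent dual-norm comparison (Lemma~\ref{lem:dNormBaseChg}) term by term; the resulting estimates are identical.
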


Next we present two lemmas that are used later in Section~\ref{sec:algo}.

\begin{lemma}\label{lem:dNormBaseChg}
Let $F:K^\circ\to\R$ be a self-concordant function. Choose $x\in K^\circ$ and $u\in B_x(x,1)$. Then for all $v\neq 0$, one has
\begin{equation}\label{eq:dNormBaseChg}
\frac{\|v\|_u^*}{\|v\|_x^*}\leq\frac{1}{1-\|u-x\|_x}.
\end{equation}
\end{lemma}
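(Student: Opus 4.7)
The plan is to reduce the inequality to the already-available self-concordance bound on the primal local norms and then pass to duals via either a variational characterization of the dual norm or, equivalently, via the PSD ordering of the Hessians. The latter is slightly cleaner, so I would run that route first and record the variational version as an alternative.

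Concretely, I would first observe that the lower bound in the definition of self-concordance implies
\[
\|w\|_u \ge (1-\|u-x\|_x)\,\|w\|_x \quad \text{for every } w \ne 0,
\]
and hence, after squaring,
\[
w^\top H(u)\, w \;\ge\; (1-\|u-x\|_x)^2\, w^\top H(x)\, w \quad \text{for every } w \in \R^n.
\]
This is the matrix inequality $H(u) \succeq (1-\|u-x\|_x)^2\, H(x)$. Since $H(u)$ and $H(x)$ are both positive definite, inverting reverses the ordering:
\[
H(u)^{-1} \;\preceq\; (1-\|u-x\|_x)^{-2}\, H(x)^{-1}.
\]
Evaluating the quadratic form at $v$ yields
\[
\|v\|_u^{*\,2} \;=\; v^\top H(u)^{-1} v \;\le\; (1-\|u-x\|_x)^{-2}\, v^\top H(x)^{-1} v \;=\; (1-\|u-x\|_x)^{-2}\,\|v\|_x^{*\,2},
\]
and taking square roots gives exactly \eqref{eq:dNormBaseChg}.

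There are essentially no obstacles; the only point of care is that one must use the \emph{lower} bound from the definition of self-concordance, not the upper bound, because the dual norm varies inversely with the primal norm (a smaller denominator $\|w\|_u$ corresponds to a larger dual norm $\|v\|_u^*$). The inversion step requires $\|u-x\|_x < 1$, which is built into the hypothesis $u \in B_x(x,1)$, so the factor $(1-\|u-x\|_x)$ is strictly positive and all manipulations are valid. If one prefers to avoid matrix inversion, the same conclusion follows by writing $\|v\|_u^* = \sup_{w \ne 0} \la v,w\ra/\|w\|_u$ and dividing numerator and denominator by $\|w\|_x$, then applying the same self-concordance lower bound on $\|w\|_u/\|w\|_x$.
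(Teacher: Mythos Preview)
Your proof is correct, and it takes a somewhat different route from the paper's. The paper argues via the operator-norm characterization of self-concordance (Theorem~\ref{thm:SCequi}): after the change of variables $v=H(x)w$ it rewrites the squared ratio as the Rayleigh quotient $\la w,H(u)^{-1}H(x)w\ra_{H(x)}/\|w\|_x^2$, bounds this by $\|H(u)^{-1}H(x)\|_x$ using \eqref{eq:lemma1ii}, and then invokes \eqref{eq:SCequi1}. You instead work directly from the defining inequality $\|w\|_u\ge(1-\|u-x\|_x)\|w\|_x$, translate it into the PSD ordering $H(u)\succeq(1-\|u-x\|_x)^2H(x)$, and invert. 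Your argument is more elementary in that it uses only the definition and the standard monotonicity of matrix inversion, avoiding both the change of variables and the appeal to Theorem~\ref{thm:SCequi}; the paper's version has the virtue of fitting the operator-norm machinery it reuses throughout the later proofs. Both are equally short, and your remark that the \emph{lower} self-concordance bound is the relevant one (because dualizing reverses the inequality) is exactly the right point of care.
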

\begin{proof}
Using the change of variables $v=H(x)w$, we get
\[
\max_{v\neq 0}\left(\frac{\|v\|_u^*}{\|v\|_x^*}\right)^2
=\max_{w\neq 0}\left(\frac{\|H(x)w\|_u^*}{\|H(x)w\|_x^*}\right)^2
=\max_{w\neq 0}\frac{\la w,H(u)^{-1}H(x)w\ra_{H(x)}}{\|w\|_x^2}
\overset{\eqref{eq:lemma1ii}}{\leq}\|H(u)^{-1}H(x)\|_x.
\]
The lemma now follows from Theorem~\ref{thm:SCequi}.
\end{proof}

\begin{lemma}\label{lem:gradBnd}
Let $F:K^\circ\to\R$ be a self-concordant function. Choose $x\in K^\circ$ and $u\in B_x(x,1)$. Then
\begin{equation}\label{eq:gradBnd}
\|g(u)-g(x)\|_x^*\leq\frac{\|u-x\|_x}{1-\|u-x\|_x}.
\end{equation}
\end{lemma}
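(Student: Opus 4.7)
The plan is to reduce the estimate to a controllable integral by writing $g(u)-g(x)$ as a line integral of the Hessian along the segment from $x$ to $u$, and then bounding the integrand via the self-concordance estimates on the Hessian ratio provided by Theorem~\ref{thm:SCequi}.

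First, I would set $x_t := x + t(u-x)$ for $t\in[0,1]$ and note that $\|x_t-x\|_x = t\|u-x\|_x < 1$, so $x_t \in B_x(x,1) \subset K^\circ$ for every $t\in[0,1]$. In particular the Hessian $H(x_t)$ is well-defined and Theorem~\ref{thm:SCequi} applies at each $x_t$. By the fundamental theorem of calculus applied to the map $t\mapsto g(x_t)$, we have
\begin{equation*}
g(u)-g(x) \;=\; \int_0^1 H(x_t)(u-x)\,dt,
\end{equation*}
so the triangle inequality for the dual norm $\|\cdot\|_x^*$ gives
\begin{equation*}
\|g(u)-g(x)\|_x^* \;\leq\; \int_0^1 \|H(x_t)(u-x)\|_x^*\,dt.
\end{equation*}

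Next I would bound the integrand. Using the identity $\|w\|_x^* = \|H(x)^{-1}w\|_x$ recalled in the preliminaries, together with the operator-norm inequality \eqref{eq:lemma1i}, we have
\begin{equation*}
\|H(x_t)(u-x)\|_x^* \;=\; \|H(x)^{-1}H(x_t)(u-x)\|_x \;\leq\; \|H(x)^{-1}H(x_t)\|_x\,\|u-x\|_x.
\end{equation*}
Since $x_t\in B_x(x,1)$, the first estimate in \eqref{eq:SCequi1} yields
\begin{equation*}
\|H(x)^{-1}H(x_t)\|_x \;\leq\; (1-\|x_t-x\|_x)^{-2} \;=\; (1-t\|u-x\|_x)^{-2}.
\end{equation*}

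Finally I would substitute and integrate: writing $r:=\|u-x\|_x<1$,
\begin{equation*}
\|g(u)-g(x)\|_x^* \;\leq\; \int_0^1 \frac{r}{(1-tr)^2}\,dt \;=\; \Bigl[\tfrac{1}{1-tr}\Bigr]_0^1 \;=\; \frac{1}{1-r}-1 \;=\; \frac{r}{1-r},
\end{equation*}
which is exactly \eqref{eq:gradBnd}. The only step requiring a bit of care is the verification that $x_t\in B_x(x,1)$ for all $t\in[0,1]$ so that the self-concordance inequality of Theorem~\ref{thm:SCequi} is legitimately applied along the whole segment; this is immediate from $\|u-x\|_x<1$. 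Everything else is a direct combination of ingredients already established in the excerpt.
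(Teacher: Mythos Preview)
Your proof is correct and follows essentially the same approach as the paper: both write $g(u)-g(x)$ as a line integral of the Hessian along the segment from $x$ to $u$, convert the dual norm via $\|w\|_x^*=\|H(x)^{-1}w\|_x$, apply the operator-norm bound \eqref{eq:lemma1i} together with \eqref{eq:SCequi1}, and integrate $r/(1-tr)^2$ from $0$ to $1$. The only difference is that you explicitly verify $x_t\in B_x(x,1)$, which the paper leaves implicit.
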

\begin{proof}
Recall from the fundamental theorem of calculus for gradients (see Theorem 1.5.6 in \cite{Ren2001}) that
\[
g(u)-g(x)=\int_0^1 H(x+t(u-x))(u-x)dt.
\]
Using this together with the triangle inequality, we obtain
\begin{align}
\|g(u)-g(x)\|_x^*
&=\left\|\int_0^1 H(x+t(u-x))(u-x)dt\right\|_x^*\notag\\
&\leq\int_0^1\|H(x+t(u-x))(u-x)\|_x^*dt\notag\\
&=\int_0^1\|H(x)^{-1}H(x+t(u-x))(u-x)\|_xdt.\label{eq:gradBndPrf1}
\end{align}
For every $0\leq t\leq 1$, Theorem~\ref{thm:SCequi} indicates
\begin{align}
\|H(x)^{-1}H(x+t(u-x))(u-x)\|_x&\overset{\eqref{eq:lemma1i}}{\leq} \|H(x)^{-1}H(x+t(u-x))\|_x\|u-x\|_x\notag\\
&\overset{\eqref{eq:SCequi1}}{\leq}\frac{\|u-x\|_x}{(1-t\|u-x\|_x)^2}.\label{eq:gradBndPrf2}
\end{align}
Putting \eqref{eq:gradBndPrf1} and \eqref{eq:gradBndPrf2} together, we get
\[
\|g(u)-g(x)\|_x^*
\leq\int_0^1\frac{\|u-x\|_x}{(1-t\|u-x\|_x)^2}dt
=\frac{\|u-x\|_x}{1-\|u-x\|_x}.\qedhere \]
\end{proof}

%

\subsection{Logarithmic Homogeneity}

Interior-point methods for conic optimization make use of a special class of self-concordant functions called logarithmically homogeneous barriers. In this section, we recall an important property of these functions.

\begin{definition}[see Sections 2.3.1 and 2.3.5 in \cite{Ren2001}]
A self-concordant function $F:K^\circ\to\R$ is said to be a \emph{logarithmically homogeneous barrier} if
\begin{enumerate}[label=\roman*.]
\item $\nu:=\sup_{x\in K^\circ}(\|g(x)\|_x^*)^2<\infty$, and
\item $F(tx)=F(x)-\nu\ln t$ for all $x\in K^\circ$ and $t>0$.
\end{enumerate}
The quantity $\nu$ is called the \emph{barrier parameter} of $F$.
\end{definition}


\begin{theorem}[see Theorem 2.3.9 in \cite{Ren2001}]\label{thm:LHBprops}
If $F:K^\circ\to\R$ is a logarithmically homogeneous barrier with parameter $\nu$, then
\begin{equation}\label{eq:LHBprops}
H(x)x=-g(x)\quad\text{and}\quad \|g(x)\|_x^*=\|x\|_x=\sqrt{\la -g(x),x\ra}=\sqrt{\nu}.
\end{equation}
\end{theorem}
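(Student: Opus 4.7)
The plan is to exploit the scaling identity $F(tx)=F(x)-\nu\ln t$ by differentiating it twice to get both the Hessian identity and the $\sqrt{\nu}$ equalities. The core observation is that the identity relates $F$ at $x$ and at $tx$ \emph{pointwise} in $x$, so one can differentiate in $x$ (to obtain a relation between $g(tx)$ and $g(x)$) and then in $t$ (to push this down to a relation between $H(x)$ and $g(x)$).

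First I would differentiate $F(tx)=F(x)-\nu\ln t$ with respect to $x$, which gives $t\,g(tx)=g(x)$, hence $g(tx)=t^{-1}g(x)$ for every $x\in K^\circ$ and $t>0$. Then I would differentiate this vector identity with respect to $t$: the chain rule applied to the left-hand side yields $H(tx)\,x$, while the right-hand side becomes $-t^{-2}g(x)$. Evaluating at $t=1$ gives the first claim $H(x)x=-g(x)$.

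Next, differentiating the original scaling identity directly with respect to $t$ produces $\la g(tx),x\ra=-\nu/t$, which at $t=1$ gives $\la -g(x),x\ra=\nu$. Combining this with the Hessian identity:
\begin{equation*}
\|x\|_x^2=\la x,H(x)x\ra=\la x,-g(x)\ra=\nu,
\end{equation*}
so $\|x\|_x=\sqrt{\nu}$. For the dual-norm identity, I would use $H(x)^{-1}g(x)=-x$ (again from $H(x)x=-g(x)$) to compute
\begin{equation*}
(\|g(x)\|_x^*)^2=\la g(x),H(x)^{-1}g(x)\ra=\la g(x),-x\ra=\nu.
\end{equation*}
Taking square roots collects all four quantities into the single value $\sqrt{\nu}$, which incidentally shows that the supremum in the definition of $\nu$ is attained at every $x\in K^\circ$.

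There is no real obstacle here beyond bookkeeping; the only subtle point is justifying the differentiations, which is immediate because $F$ is twice continuously differentiable on $K^\circ$ and the map $t\mapsto tx$ is smooth with image in $K^\circ$ for $t>0$ (since $K$ is a cone). Once the chain-rule step $\frac{d}{dt}g(tx)=H(tx)x$ is written down carefully, everything else is algebraic substitution.
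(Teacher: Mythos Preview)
Your argument is correct and is the standard derivation: differentiating the logarithmic homogeneity identity in $x$ and in $t$ produces $H(x)x=-g(x)$ and $\langle -g(x),x\rangle=\nu$, from which the norm identities follow by substitution. The paper itself does not prove this statement; it merely quotes it from Renegar's textbook (Theorem~2.3.9 in \cite{Ren2001}), and your proof is essentially the one found there.
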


\subsection{Conjugacy}

In this section, we recall useful results about conjugates of logarithmically homogeneous barriers.

\begin{definition}[see Section 3.3 in \cite{Ren2001}]
The \emph{conjugate} of the function $F:K^\circ\to\R$ is the function $F^*:(K^*)^\circ\to\R$ defined as
\[
F^*(s):=-\inf_{x\in K^\circ}\{\la x,s\ra+F(s)\}.
\]
\end{definition}


We note that this definition of the conjugate is standard in the literature on interior-point methods for convex optimization, but differs slightly from the classical notion of a Fenchel conjugate in convex analysis. The next two results highlight the duality relationship between $F$ and $F^*$.

\begin{theorem}[see Theorem 3.3.1 in \cite{Ren2001}]\label{thm:SCconj}
If $F:K^\circ\to\R$ is a logarithmically homogeneous barrier with parameter $\nu$, then so is $F^*$.
\end{theorem}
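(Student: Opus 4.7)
The plan is to analyze the Legendre transformation associated with $F$ and transfer its properties to $F^*$. First, I would show that for every $s\in(K^*)^\circ$, the objective $\phi_s(x):=\la x,s\ra+F(x)$ attains its infimum at a unique interior point $x^*(s)\in K^\circ$: uniqueness follows from the strict convexity of $F$, while existence follows because $s\in(K^*)^\circ$ makes the linear term coercive on $K$ and $F$ is a barrier (preventing the minimizing sequence from approaching $\partial K$). The first-order condition reads $g(x^*(s))=-s$, so $x\mapsto -g(x)$ is a smooth bijection $K^\circ\to(K^*)^\circ$ with inverse $s\mapsto x^*(s)$. The envelope theorem together with implicit differentiation of $g(x^*(s))=-s$ then yields the fundamental identities $g^*(s)=-x^*(s)$ and $H^*(s)=H(x^*(s))^{-1}$.

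With these identities in hand, the logarithmic homogeneity of $F^*$ follows by substituting $y=tx$ in the infimum defining $F^*(ts)$ and invoking $F(y/t)=F(y)+\nu\ln t$, giving $F^*(ts)=F^*(s)-\nu\ln t$. The barrier parameter of $F^*$ is computed using Theorem~\ref{thm:LHBprops}:
\[
(\|g^*(s)\|_s^*)^2=\la g^*(s),H^*(s)^{-1}g^*(s)\ra=\la x^*(s),H(x^*(s))x^*(s)\ra=\la x^*(s),-g(x^*(s))\ra=\nu.
\]

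The central and most delicate task is to verify that $F^*$ is itself self-concordant. The pivotal observation is that because $H^*(s)=H(x^*(s))^{-1}$, the local norm for $F^*$ at $s$ equals the dual local norm for $F$ at $x:=x^*(s)$, and the differential $Dx^*(s)=-H(x)^{-1}$ is a linear isometry between these local norms. I would verify the two parts of the self-concordance definition (the Dikin ball inclusion and the norm-ratio bound) by pulling each back to $F$. For the inclusion $B_s(s,1)\subset(K^*)^\circ$, given $r$ with $\|r-s\|_s<1$ one has $\|g(x)+r\|_x^*=\|r-s\|_x^*<1$, and a damped Newton argument on $\phi_r$ starting from $x$, using Theorem~\ref{thm:renegarNew} and Lemma~\ref{lem:gradBnd}, produces a minimizer $x_r\in K^\circ$, whence $r=-g(x_r)\in(K^*)^\circ$. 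For the norm-ratio bound, one then invokes Lemma~\ref{lem:dNormBaseChg} and Theorem~\ref{thm:SCequi} at $x$ to control $\|u\|_{x_r}^*/\|u\|_x^*$. The hard part will be quantitatively relating the dual distance $\|r-s\|_x^*$ to the primal distance $\|x_r-x\|_x$: Lemma~\ref{lem:gradBnd} bounds the dual gradient distance in terms of the primal distance, but the direction required here is the converse, so a reverse-type estimate (obtained by applying the self-concordance of $F$ along the segment joining $x$ and $x_r$, or via a Newton-contraction argument) is needed to close the loop.
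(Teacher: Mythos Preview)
The paper does not supply a proof of Theorem~\ref{thm:SCconj}; the result is quoted from Renegar's monograph as background, so there is no argument in the paper to compare your proposal against.

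Your outline handles the easy parts correctly (attainment of the infimum, the identities $g^*(s)=-x^*(s)$ and $H^*(s)=H(x^*(s))^{-1}$, logarithmic homogeneity, and the barrier-parameter computation). The self-concordance verification, however, has a real gap that you flag but do not close. For the norm-ratio bound you plan to invoke Lemma~\ref{lem:dNormBaseChg}, which requires $x_r\in B_x(x,1)$, and then relate $\|x_r-x\|_x$ to $\|r-s\|_x^*$ by a ``reverse Lemma~\ref{lem:gradBnd}.'' But already for $K=\R_+$ and $F(x)=-\log x$ one has $x^*(s)=1/s$, and for $0<r<s$ one computes $\|r-s\|_x^*=(s-r)/s$ while $\|x_r-x\|_x=(s-r)/r$; hence whenever $\|r-s\|_x^*\in[\tfrac12,1)$ one has $\|x_r-x\|_x\ge 1$ and Lemma~\ref{lem:dNormBaseChg} is inapplicable. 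Worse, in this same example the target inequality is attained with equality, $\|v\|_{x_r}^*/\|v\|_x^*=s/r=1/(1-\|r-s\|_x^*)$, so no route that first passes to $\|x_r-x\|_x$ and then applies Lemma~\ref{lem:dNormBaseChg} can recover the required constant. Your damped-Newton argument for the Dikin-ball inclusion also hides a circularity: convergence of the iterates to a minimizer of $\phi_r$ in $K^\circ$ is usually deduced from $\phi_r$ being bounded below, which already presupposes $r\in K^*$. The standard proofs in \cite{Ren2001} and \cite{NN1994} avoid the primal-distance detour altogether and work directly with the Hessian relation $H^*(s)=H(x)^{-1}$.
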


The premise that $F^*$ is a logarithmically homogeneous barrier implies in particular that it is twice continuously differentiable. Let $g^*(s)$ and $H^*(s)$ denote the gradient and Hessian of $F^*$ at a point $s\in(K^*)^\circ$ with respect to $\la\cdot,\cdot\ra$.

\begin{theorem}[see Proposition 3.3.3 and Theorem 3.3.4 in \cite{Ren2001}]\label{thm:SCsymm}
Let $F:K^\circ\to\R$ be a logarithmically homogeneous barrier.
\begin{enumerate}[label=\roman*.]
\item The gradient map $g:K^\circ\to\R^n$ defines a bijection between $K^\circ$ and $-(K^*)^\circ$.
\item If $x\in K^\circ$ and $s\in(K^*)^\circ$ satisfy $s=-g(x)$, then
\begin{equation}\label{eq:SCsymm}
-g^*(s)=x\quad\text{and}\quad H^*(s)=H(x)^{-1}.
\end{equation}
\end{enumerate}
\end{theorem}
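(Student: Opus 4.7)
The plan is to prove (i) and (ii) separately. For (i), I verify in turn that $g(K^\circ)\subseteq -(K^*)^\circ$, that $g$ is injective, and that $g$ is surjective onto $-(K^*)^\circ$. For (ii), I identify the unique minimizer in the infimum defining $F^*(s)$ as $x$, then apply the implicit function theorem together with the envelope theorem to compute $g^*(s)$ and $H^*(s)$.

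For the inclusion, I first show $g(K^\circ)\subseteq -K^*$. Fix $x\in K^\circ$ and $y\in K^\circ$, and consider $\phi(t):=F(x+ty)$ on $[0,\infty)$. This $\phi$ is convex, and by logarithmic homogeneity $\phi(t)=F(y+x/t)-\nu\ln t$, which diverges to $-\infty$ as $t\to\infty$ because $F(y+x/t)\to F(y)$. A convex function on $[0,\infty)$ that is unbounded below must satisfy $\phi'(t)\leq 0$ for every $t\geq 0$ (otherwise the linear lower bound from convexity would force $\phi\to+\infty$), so $\phi'(0)=\la g(x),y\ra\leq 0$, which extends by continuity to all $y\in K$ and gives $-g(x)\in K^*$. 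Because $H$ is everywhere positive definite, the inverse function theorem shows that $g$ is a local diffeomorphism, so $g(K^\circ)$ is open in $\R^n$; an open subset of $-K^*$ is necessarily contained in $-(K^*)^\circ$. Injectivity follows from strict convexity of $F$ (equivalently, strict monotonicity of $g$: $\la g(x_1)-g(x_2),x_1-x_2\ra>0$ for $x_1\neq x_2$). For surjectivity, given $s\in(K^*)^\circ$, pick $\delta>0$ with $\la s,x\ra\geq\delta\|x\|$ for all $x\in K$, and consider $\phi_s(x):=F(x)+\la s,x\ra$ on $K^\circ$: $\phi_s$ is strictly convex, blows up near $\partial K$ by the barrier property, and tends to $+\infty$ as $\|x\|\to\infty$ since the linear term dominates the logarithmic decay of $F$ along rays. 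Hence $\phi_s$ attains a unique minimizer $x\in K^\circ$, whose first-order condition reads $g(x)=-s$.

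For (ii), when $s=-g(x)$ strict convexity makes $x$ the unique minimizer of $F(\cdot)+\la\cdot,s\ra$, so $F^*(s)=-\la x,s\ra-F(x)$. By (i) and the implicit function theorem applied to $s+g(x(s))=0$ (with invertible Jacobian $H(x(s))$), the map $s\mapsto x(s)$ is smooth and satisfies $\nabla_s x(s)=-H(x(s))^{-1}$. Differentiating $F^*(s)=-\la x(s),s\ra-F(x(s))$ in $s$ and using the first-order condition $g(x(s))=-s$, the terms containing $\nabla_s x(s)$ cancel (envelope theorem), yielding $g^*(s)=-x(s)$. A second differentiation then gives $H^*(s)=-\nabla_s x(s)=H(x(s))^{-1}$.

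The main obstacle is the coercivity of $\phi_s$ required for surjectivity in (i): logarithmic homogeneity permits $F$ to drift to $-\infty$ along rays in $K^\circ$, and this decay must be carefully dominated by the bound $\la s,x\ra\geq\delta\|x\|$ available for $s\in(K^*)^\circ$, while near $\partial K$ one relies on the barrier blow-up of $F$. Once these boundary and growth estimates are in place, the remaining arguments reduce to strict convexity, the inverse/implicit function theorem, and routine envelope-theorem bookkeeping.
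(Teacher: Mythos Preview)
The paper does not supply its own proof of this theorem: it is stated with the attribution ``see Proposition~3.3.3 and Theorem~3.3.4 in \cite{Ren2001}'' and then used as a black box. So there is no proof in the paper against which to compare your attempt.

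That said, your argument is essentially the standard one and is correct in outline. A couple of places deserve a slightly firmer hand. First, for the surjectivity step you invoke that $F$ ``blows up near $\partial K$ by the barrier property''; this is true for logarithmically homogeneous self-concordant barriers, but it is not literally part of the definition given in the paper, so you should either cite it or derive it from the Dikin-ball condition $B_x(x,1)\subset K^\circ$. Second, your coercivity claim for $\phi_s(x)=F(x)+\la s,x\ra$ is stated only ``along rays''; to conclude that $\phi_s(x)\to+\infty$ as $\|x\|\to\infty$ uniformly in $K^\circ$, write $x=t u$ with $t=\|x\|$, $\|u\|=1$, use $F(tu)=F(u)-\nu\ln t$, and observe that $F$ is bounded below on $\{u\in K^\circ:\|u\|=1\}$ because it is continuous there and tends to $+\infty$ toward the relative boundary. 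With these two points tightened, the existence of a minimizer (hence surjectivity of $-g$ onto $(K^*)^\circ$) follows cleanly, and your envelope-theorem computation for $g^*$ and $H^*$ is routine and correct.
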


For any $s\in(K^*)^\circ$, the inner product $\la\cdot,\cdot\ra_{H^*(s)}$ equips the space $\R^n$ with the norm $\|u\|_s^*:=\|H^*(s)^{1/2}u\|$.
Theorem~\ref{thm:SCsymm}(i) indicates that there exists $x\in K^\circ$ such that $s=-g(x)$, and Theorem~\ref{thm:SCsymm}(ii) implies
\begin{equation}\label{eq:dNorms}
\|u\|_s^*=\|H^*(s)^{1/2}u\|\overset{\eqref{eq:SCsymm}}{=}\|H(x)^{-1/2}u\|=\|u\|_x^*.
\end{equation}

\section{The Skajaa--Ye Algorithm}\label{sec:algo}

In the paper \cite{SY2015}, Skajaa and Ye present an interior-point method that finds a feasible solution to the homogeneous self-dual embedding of problems (P) and (D):
\begin{gather}\label{eq:HSD}
\begin{aligned}
&               &&\phantom{--}Ax  &&-b\tau&&          &&          &&=0\\
&-A^\top y      &&          &&+c\tau&&-s        &&          &&=0\\
&\phantom{--}b^\top y &&-c^\top x &&      &&          &&-\kappa   &&=0
\end{aligned}\\
\begin{gathered}
y\in\R^m,\quad (x;\tau)\in K\times\R_+,\quad (s;\kappa)\in K^*\times\R_+.\notag
\end{gathered}
\end{gather}
The authors show that, if (P) and (D) are both feasible and have zero duality gap, Algorithm~\ref{alg:SY} returns a solution that can be converted into optimal solutions to the original problems (P) and (D), and if one or both of (P) and (D) are infeasible, Algorithm~\ref{alg:SY} returns infeasibility certificates for those problems (see Lemma 1 in \cite{SY2015} and the discussion that follows).

Before describing the Skajaa--Ye algorithm, we present some of the notation from \cite{SY2015}. For convenience, let
\begin{alignat*}{3}
&\bx:=(x;\tau),\qquad&&\bs:=(s;\kappa),\qquad&&z:=(\bx;y;\bs),\\
&\bK:=K\times\R_+,\qquad&&\bK^*:=K^*\times\R_+,\qquad&&\cF:=\bK\times\R^m\times\bK^*.
\end{alignat*}
Define $G$ as the skew-symmetric matrix
\[
G:=
\left(
\begin{array}{ccc}
 0      & A     & -b\\
-A^\top & 0     & \phantom{-}c\\
\phantom{-}b^\top &-c^\top& \phantom{-}0
\end{array}
\right).
\]
In this notation, the model \eqref{eq:HSD} can be expressed compactly as
\[
G(y;\bx)-(0;\bs)=(0;0)\quad\text{and}\quad z=(\bx;y;\bs)\in\cF.
\]

Let $F:K^\circ\to\R$ be a logarithmically homogeneous barrier for $K$ with parameter $\nu$.
Recall from Theorem~\ref{thm:SCconj} that its conjugate $F^*:(K^*)^\circ\to\R$ is also a logarithmically homogeneous barrier with parameter $\nu$. Note that $\nu\geq 1$ because $K$ is pointed (see Corollary~2.3.3 in \cite{NN1994}).
We define the functions $\bF:\bK^\circ\to\R$ and $\bF^*:(\bK^*)^\circ\to\R$ as
\[
\bF(\bx) := F(x)-\log\tau\quad\text{and}\quad\bF^*(\bs) := F^*(s)-\log\kappa.
\]
These are logarithmically homogeneous barriers with parameter $\bnu:=\nu+1\geq 2$. Here we diverge slightly from the notation of \cite{SY2015} to explicitly distinguish between the derivatives of $F$ and $\bF$: We let $\bg(\bx)$ and $\bH(\bx)$ denote the gradient and Hessian of $\bF$ at a point $\bx\in\bK^\circ$ with respect to the standard inner product $\la\cdot,\cdot\ra$ in $\R^{n+1}$. Similarly, we let $\bg^*(\bs)$ and $\bH^*(\bs)$ denote the gradient and Hessian of $\bF^*$ at a point $\bs\in(\bK^*)^\circ$ with respect to the inner product $\la\cdot,\cdot\ra$. For every $\bx\in\bK^\circ$ and $\bs\in(\bK^*)^\circ$, we define the local norms $\|u\|_{\bx}:=\|\bH(\bx)^{1/2}u\|$,  $\|u\|_{\bx}^*:=\|\bH(\bx)^{-1/2}u\|$, and $\|u\|_{\bs}^*:=\|\bH^*(\bs)^{1/2}u\|$. As mentioned in Section~\ref{sec:pre}, these local norms are related through the identities $\|\bH(\bx)^{-1}u\|_{\bx}=\|u\|_{\bx}^*$ and $\|u\|_{\bx}^*=\|u\|_{\bs}^*$ where $\bs=-\bg(\bx)$.

Let $\mu(z):=\bx^\top\bs/\bnu$ denote the \emph{complementarity gap} of $z=(\bx;y;\bs)\in\cF$. Let $\psi:\bK^\circ\times(\bK^*)^\circ\times\R\to\R^{n+1}$ be the map defined as
\[
\psi(\bx,\bs,t):=\bs+t\bg(\bx).
\]
It follows directly from these definitions and Theorem~\ref{thm:LHBprops} that, for every $z=(\bx;y;\bs)\in\cF$, one has
\begin{equation}\label{eq:psix}
\psi(\bx,\bs,\mu(z))^\top\bx
=\bs^\top\bx+\mu(z)\bg(\bx)^\top\bx\overset{\eqref{eq:LHBprops}}{=}\mu(z)\bnu-\mu(z)\bnu=0.
\end{equation}
We refer the reader to Section 4.2 of \cite{SY2015} for a formal description of the central path for the model \eqref{eq:HSD}.
For fixed $\theta\in[0,1]$, the $\theta$-neighborhood of the central path is defined as
\[
\cN(\theta):=\left\{z=(\bx;y;\bs)\in\cF^\circ:\;\|\psi(\bx,\bs,\mu(z))\|_{\bx}^*\leq\theta\mu(z)\right\}.
\]


We state the Skajaa--Ye algorithm \cite{SY2015} as Algorithm~\ref{alg:SY} below. The algorithm alternates between a predictor phase and a corrector phase until certain termination criteria are satisfied. Each corrector phase consists of $r_c$ consecutive corrector steps for some fixed parameter $r_c>0$. At each predictor and corrector step, the update direction is computed solving a linear system, and the current solution is updated along this direction using the step length $\af_p>0$ in a predictor step and the step length $\af_c>0$ in a corrector step. With appropriately fixed parameters $\beta>\eta>0$, the algorithm maintains the invariants that the predictor step updates a solution $z\in\cN(\eta)$ to a solution $z^+\in\cN(\beta)$ and the sequence of $r_c$ corrector steps update a solution $z\in\cN(\beta)$ to a solution $z^+\in\cN(\eta)$.

The complexity analysis shows that the parameters $\eta,\beta,\af_p,\af_c$ and $r_c$ can be chosen such that, given any initial solution $z^0=(\bx^0;y^0;\bs^0)\in\cN(\eta)$ and $\varepsilon>0$, the algorithm converges to a solution $z=(\bx;y;\bs)\in\cF^\circ$ that satisfies
\[
\mu(z)\leq\varepsilon\mu(z^0)\quad\text{and}\quad\|G(y;\bx)-(0;\bs)\|\leq\varepsilon\|G(y^0;\bx^0)-(0;\bs^0)\|
\]
in $O(\sqrt{\nu}\log\varepsilon^{-1})$ iterations (see Theorem~1 in \cite{SY2015}). In the algorithm, as stated, the step sizes are fixed both in the predictor and in the corrector steps. The analysis is also applicable to the variant that instead uses line search in the predictor phase to find the (approximately) largest $\af_p$ for which $z+\af_p\dz \in \cN(\beta)$.

{\centering
\begin{minipage}[T]{\linewidth}
\begin{algorithm}[H]
\caption{Predictor-Corrector Algorithm for Non-Symmetric Cone Optimization}\label{alg:SY}
\begin{algorithmic}
\STATE \hspace{-1em}\textbf{Parameters:} Real numbers $\eta,\beta,\af_p,\af_c>0$ and integer $r_c>0$
\STATE \hspace{-1em}\textbf{Input:} Logarithmically homogeneous barrier $F$ for $K$ and initial point $z\in\cN(\eta)$
\LOOP
\STATE \textbf{Termination?}
\STATE If termination criteria are satisfied, stop and return $z$.
\STATE \textbf{Prediction}
\STATE Solve the linear system \eqref{eq:eqnSysPred} for $\dz=(\dx;\dy;\ds)$.
\STATE Set $z\leftarrow z+\af_p\dz$.
\STATE \textbf{Correction}
\FORALL{$i=1,\ldots,r_c$}
\STATE Solve the linear system \eqref{eq:eqnSysCorr} for $\dz=(\dx;\dy;\ds)$.
\STATE Set $z\leftarrow z+\af_c\dz$.
\ENDFOR
\ENDLOOP
\end{algorithmic}
\end{algorithm}
\end{minipage}
\vspace{1em}
}

Two key intermediate results in this complexity analysis are Lemmas~5 and 6 in \cite{SY2015}, which demonstrate that the updated solution at the end of the predictor phase (resp. corrector phase) satisfies $z^+\in\cN(\beta)$ (resp. $z^+\in\cN(\eta)$) with the suggested parameters. However, Theorem~\ref{thm:renegarOrg} stated above was used incorrectly in the proofs of these results; we demonstrate this with counterexamples in Appendix~B. We propose Lemmas~\ref{lem:newLemma5} and \ref{lem:newLemma6} below to replace the two aforementioned lemmas from \cite{SY2015}. Our proof of Lemma~\ref{lem:newLemma6} uses the more general Theorem~\ref{thm:renegarNew} instead of Theorem~\ref{thm:renegarOrg}, while our proof of Lemma~\ref{lem:newLemma5} does not make use of Theorem~\ref{thm:renegarOrg} at all. Furthermore, whereas Skajaa and Ye \cite{SY2015} state and analyze their algorithm for the case $r_c=2$, our analysis shows that the same asymptotic complexity result can be established for any fixed $r_c>0$. Therefore, in the statements of Lemmas~\ref{lem:newLemma5} and \ref{lem:newLemma6} below, we provide suitable parameter values for both $r_c=1$ and $r_c=2$.

\begin{lemma}\label{lem:newLemma5}
Let $\kx:=\eta+\sqrt{2\eta^2+\bnu}$. Assume one of the following:
\begin{itemize}
\item $\beta=0.20$, $\epsilon=0.50$, $r_c=1$, $\eta=\beta\epsilon^{r_c}$, and $\af_p=0.020\kx^{-1}$, or
\item $\beta=0.25$, $\epsilon=0.70$, $r_c=2$, $\eta=\beta\epsilon^{r_c}$, and $\af_p=0.025\kx^{-1}$.
\end{itemize}
If $z\in\cN(\eta)$, then the predictor phase yields a solution $z^+$ that satisfies $z^+\in\cN(\beta)$.
\end{lemma}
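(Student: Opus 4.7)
The plan is to control $\psi(\bx^+,\bs^+,\mu(z^+))$, the quantity appearing in the definition of $\cN(\beta)$, by decomposing it as a ``shrunken'' version of the old residual $\psi(\bx,\bs,\mu(z))$ (known to be small by $z\in\cN(\eta)$), plus first- and second-order correction terms that can be bounded via the self-concordance lemmas of Section~\ref{sec:pre}. Unpacking the predictor system \eqref{eq:eqnSysPred}---designed to reduce both the complementarity gap and the feasibility residual by the factor $(1-\af_p)$, so that $\ds$ satisfies $\ds+\mu(z)\bH(\bx)\dx=-\bs$---one arrives at the decomposition
\[
\psi(\bx^+,\bs^+,\mu(z^+)) \;=\; (1-\af_p)\,\psi(\bx,\bs,\mu(z)) \;-\; \af_p\mu(z)\bH(\bx)\dx \;+\; \mu(z^+)\bigl[\bg(\bx^+)-\bg(\bx)\bigr].
\]

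The three summands can be bounded in the $\|\cdot\|_{\bx}^*$ norm: the first by $\eta\mu(z^+)$ (using $z\in\cN(\eta)$ and $\mu(z^+)=(1-\af_p)\mu(z)$), the second by $\af_p\mu(z)\|\dx\|_{\bx}$ (since $\|\bH(\bx)u\|_{\bx}^*=\|u\|_{\bx}$), and the third via Lemma~\ref{lem:gradBnd} by $\af_p\|\dx\|_{\bx}/(1-\af_p\|\dx\|_{\bx})$ times $\mu(z^+)$. Converting from $\|\cdot\|_{\bx}^*$ to the ambient $\|\cdot\|_{\bx^+}^*$ norm via Lemma~\ref{lem:dNormBaseChg} costs only a factor $(1-\af_p\|\dx\|_{\bx})^{-1}$. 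The crucial intermediate estimate is $\|\dx\|_{\bx}\leq\kx$: the plan is to derive this by writing $\bs=\psi(\bx,\bs,\mu(z))-\mu(z)\bg(\bx)$, invoking the barrier identities $\bH(\bx)\bx=-\bg(\bx)$ and $(\|\bg(\bx)\|_{\bx}^*)^2=\bnu$ from Theorem~\ref{thm:LHBprops}, and combining them with the Cauchy--Schwarz inequality \eqref{eq:CS} and the orthogonality relation $\dx^\top\ds=0$ arising from skew-symmetry of $G$. This should produce a quadratic inequality of the shape $\|\dx\|_{\bx}^2 \leq 2\eta\|\dx\|_{\bx} + (\eta^2+\bnu)$, whose positive root is exactly $\kx=\eta+\sqrt{2\eta^2+\bnu}$. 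The step-size choice $\af_p\leq 0.025\,\kx^{-1}$ then forces $\af_p\|\dx\|_{\bx}\leq 0.025<1$, which by self-concordance keeps $\bx^+\in\bK^\circ$ (and a symmetric argument keeps $\bs^+\in(\bK^*)^\circ$), so that $z^+\in\cF^\circ$ and the estimates from Lemmas~\ref{lem:dNormBaseChg} and \ref{lem:gradBnd} remain sharp.

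Assembling the pieces gives a bound $\|\psi(\bx^+,\bs^+,\mu(z^+))\|_{\bx^+}^* \leq C(\af_p\kx,\eta)\,\mu(z^+)$ for an explicit expression $C$, and the verification $C(\af_p\kx,\eta)\leq\beta$ in each of the two parameter regimes reduces to a finite numerical check using $\af_p\kx\in\{0.020,0.025\}$ together with the corresponding values of $\eta$ and $\beta$. The main obstacle is the derivation of the quadratic bound $\|\dx\|_{\bx}\leq\kx$: extracting it requires a careful interplay between the barrier identities (to handle the gradient piece $\mu(z)\bg(\bx)$ implicit in $\bs$) and the skew-symmetry of $G$ (to express $\dx^\top\ds$ in usable form), without picking up extra multiplicative constants that would loosen $\kx$. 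Once that quadratic estimate is in hand, the rest of the argument is assembly and numerical bookkeeping.
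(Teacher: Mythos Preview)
Your overall strategy---decompose $\psi^+$, bound each piece in $\|\cdot\|_{\bx}^*$, then convert to $\|\cdot\|_{\bx^+}^*$ via Lemma~\ref{lem:dNormBaseChg}---is essentially what the paper does in Proposition~\ref{prop:psiBndPred}. However, two of your building blocks are false for the predictor step, and they break the argument as written.

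First, the identity $\mu(z^+)=(1-\af_p)\mu(z)$ does not hold. Lemma~\ref{lem:resRedPred} gives $\mu(z^+)=(1-\af_p)\bigl(\mu(z)+\af_p\bnu^{-1}\psi^\top\dx\bigr)$, and the extra term is generally nonzero. As a consequence your decomposition is missing a summand; the correct expansion reads
\[
\psi^+ \;=\; (1-\af_p)\psi \;-\; \af_p\mu\,\bH(\bx)\dx \;+\; \mu^+\bigl[\bg(\bx^+)-\bg(\bx)\bigr] \;+\; \bigl[\mu^+-(1-\af_p)\mu\bigr]\bg(\bx).
\]
The omitted term has $\|\cdot\|_{\bx}^*$-norm at most $(1-\af_p)\af_p\eta\kx\bnu^{-1/2}\mu$, so the approach is salvageable once you include it and use~\eqref{eq:muRatioPred} (rather than the false equality) to pass between $\mu$ and $\mu^+$; this is precisely what produces the $(1-c_p\eta\bnu^{-1})$-type denominators in the paper's bound~\eqref{eq:psiBndPred}.

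Second, the orthogonality $\dx^\top\ds=0$ holds for the \emph{corrector} (whose first block has zero right-hand side), but \emph{not} for the predictor: with the nonzero right-hand side in~\eqref{eq:eqnSysPred1}, skew-symmetry of $G$ yields only $(\bx+\dx)^\top(\bs+\ds)=0$. Your quadratic derivation of $\|\dx\|_{\bx}\leq\kx$ therefore does not go through as stated. You can simply cite the bound (it is Lemma~\ref{lem:dxdsBndPred}, taken from~\cite{SY2015}), or redo the computation starting from the correct relation. The paper avoids your middle term altogether by using the coarser decomposition $\psi^+=\psi+\mu^+(\bg(\bx^+)-\bg(\bx))+(\mu^+-\mu)\bg(\bx)+\af_p\ds$ together with the ready-made bound $\|\ds\|_{\bx}^*\leq\ks\mu$ from Lemma~\ref{lem:dxdsBndPred}; your route, once repaired, would actually give slightly sharper constants since $\|\af_p\mu\bH(\bx)\dx\|_{\bx}^*\leq c_p\mu$ beats $\af_p\ks\mu\leq(1+\sqrt{2})c_p\mu$.
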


\begin{lemma}\label{lem:newLemma6}
Assume one of the following:
\begin{itemize}
\item $\beta=0.20$, $\epsilon=0.50$, $r_c=1$, $\eta=\beta\epsilon^{r_c}$, and $\af_c=1$, or
\item $\beta=0.25$, $\epsilon=0.70$, $r_c=2$, $\eta=\beta\epsilon^{r_c}$, and $\af_c=1$.
\end{itemize}
If $z\in\cN(\beta)$, then the corrector phase yields a solution $z^+$ that satisfies $z^+\in\cN(\eta)$.
\end{lemma}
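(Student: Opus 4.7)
The plan is to analyze a single corrector step and then iterate the resulting contraction $r_c$ times. Each step should shrink the proximity $\phi(z):=\|\psi(\bx,\bs,\mu(z))\|_{\bx}^*/\mu(z)$ in the Newton-quadratic fashion familiar from the analysis of self-concordant minimization.

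My first step is to extract two consequences of the structure of the corrector linear system \eqref{eq:eqnSysCorr}. The feasibility block $G(\dy;\dx)=(0;\ds)$ combined with the skew-symmetry of $G$ yields the orthogonality $\dx^\top\ds=0$; the centering block $\ds+\mu\bH(\bx)\dx=-\psi(\bx,\bs,\mu)$ (with $\mu:=\mu(z)$), after pairing with $\dx$ and invoking Cauchy--Schwarz \eqref{eq:CS}, gives the key estimate $\|\dx\|_{\bx}\leq\phi(z)\leq\beta<1$. Self-concordance then places $\bx^+:=\bx+\dx$ inside $\bK^\circ$, so the local norms based at $\bx^+$ needed to evaluate $\cN(\cdot)$ are available.

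Next I would derive a clean formula for the updated residual. Expanding $\bg(\bx^+)=\bg(\bx)+\bH(\bx)\dx+R(\dx)$ with integral remainder $R(\dx):=\int_0^1[\bH(\bx+t\dx)-\bH(\bx)]\dx\,dt$ and substituting into $\psi(\bx^+,\bs^+,\mu)=\bs+\ds+\mu\bg(\bx+\dx)$, the centering equation forces the first-order terms to cancel exactly, leaving the clean identity $\psi(\bx^+,\bs^+,\mu)=\mu R(\dx)$. Correcting for the shift in the complementarity gap then yields
\[
\psi(\bx^+,\bs^+,\mu^+) \;=\; \mu\,R(\dx) \,+\, (\mu^+-\mu)\,\bg(\bx^+),
\]
where Theorem~\ref{thm:LHBprops} together with $\dx^\top\ds=0$ produces the explicit formula $\mu^+=\mu(1-\|\dx\|_{\bx}^2/\bnu)$. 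Integrating the self-concordance estimate \eqref{eq:SCequi2} supplies $\|R(\dx)\|_{\bx}^*\leq\|\dx\|_{\bx}^2/(1-\|\dx\|_{\bx})$; Lemma~\ref{lem:dNormBaseChg} transfers this bound into $\|\cdot\|_{\bx^+}^*$, and Theorem~\ref{thm:LHBprops} supplies $\|\bg(\bx^+)\|_{\bx^+}^*=\sqrt{\bnu}$ to control the correction term.

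Combining these estimates produces a one-step bound of the form $\phi(z^+)\leq\bigl(\|\dx\|_{\bx}/(1-\|\dx\|_{\bx})\bigr)^2 \cdot (1+o(1)) + O(\|\dx\|_{\bx}^2/\sqrt{\bnu})$. Iterating the resulting contraction $r_c$ times from $\beta_0=\beta$ and verifying numerically that $\beta_{r_c}\leq\eta=\beta\epsilon^{r_c}$ for both parameter regimes then closes the argument. The main obstacle I anticipate is keeping the correction term tight enough to accommodate the worst case $\bnu=2$: here, the explicit $\sqrt{\bnu}$ scaling coming from Theorem~\ref{thm:LHBprops} (rather than the cruder bound $\sqrt{\bnu}+\beta$ that one obtains from a triangle-inequality split of $\|\bg(\bx)+\bH(\bx)\dx\|_{\bx^+}^*$) is essential, and this is precisely where the additional flexibility of Theorem~\ref{thm:renegarNew} over Theorem~\ref{thm:renegarOrg} is exploited, since the effective damped step length induced by the feasibility projection of the unconstrained Newton direction $n(\bx)=-\bH(\bx)^{-1}(\bg(\bx)+\bs/\mu)$ enters the bound in exactly the required form.
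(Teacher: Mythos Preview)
Your argument is correct and lands on exactly the paper's bound: with $\af_c=1$, Proposition~\ref{prop:psiBndCorr} reads $(2-\theta^2)^{-1}\bigl(2\theta^2/(1-\theta)^2+\sqrt{2}\theta^2\bigr)$, which equals your $\bigl(\theta^2/(1-\theta)^2+\theta^2/\sqrt{2}\bigr)/(1-\theta^2/2)$. The mechanics coincide as well: your identity $\psi(\bx^+,\bs^+,\mu)=\mu R(\dx)$ is the paper's observation that $\dx$ is the \emph{exact} Newton step for the auxiliary function $f(v)=\mu^{-1}(\bs+\ds)^\top v+\bF(v)$, so that $\mu^{-1}\|\bs+\ds+\mu\bg(\bx^+)\|_{\bx^+}^*=\|n_f(\bx^+)\|_{\bx^+}$, and your integral remainder bound followed by Lemma~\ref{lem:dNormBaseChg} is precisely the proof of Theorem~\ref{thm:renegarOrg} unpacked.

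One remark in your last paragraph is off, though. You neither need nor use Theorem~\ref{thm:renegarNew}: since $\af_c=1$ in both parameter regimes, $\bx\to\bx^+$ is a \emph{full} Newton step for $f$, and Theorem~\ref{thm:renegarOrg} (equivalently, your direct remainder computation) already suffices. There is no ``effective damped step length induced by the feasibility projection'' at play; the whole point of taking the linear part to be $(\bs+\ds)^\top v$ rather than $\bs^\top v$ is that $\dx$ becomes the unconstrained Newton step for $f$ on the nose, not a projected or damped version of $-\bH(\bx)^{-1}(\bg(\bx)+\bs/\mu)$. The paper states Proposition~\ref{prop:psiBndCorr} for general $\af_c\in[0,1]$, where Theorem~\ref{thm:renegarNew} is genuinely required, but Lemma~\ref{lem:newLemma6} itself only invokes $\af_c=1$. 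Finally, do not forget to record that $\bs^+\in(\bK^*)^\circ$; once you have $\phi(z^+)<1$ and $\bx^+\in\bK^\circ$, this follows exactly as in Lemma~\ref{lem:inCorr}.
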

The next two sections are dedicated to the proofs of Lemmas~\ref{lem:newLemma5} and \ref{lem:newLemma6}.

\subsection{Prediction}

In this section, we present the proof of Lemma~\ref{lem:newLemma5}. In the paper \cite{SY2015}, the update direction $(\dx;\dy;\ds)$ at each predictor step is computed as the solution to the linear system
\begin{subequations}\label{eq:eqnSysPred}
\begin{alignat}{1}
G(\dy;\dx)-(0;\ds)&=-(G(y;\bx)-(0;\bs)),\label{eq:eqnSysPred1}\\
\ds+\mu(z)\bH(\bx)\dx&=-\bs.\label{eq:eqnSysPred2}
\end{alignat}
\end{subequations}
The next three lemmas summarize useful results from \cite{SY2015} about the predictor step.

\begin{lemma}[see (34-35) in \cite{SY2015}]\label{lem:dxdsBndPred}
Let $z=(\bx;y;\bs)\in\cN(\eta)$ for some $0\leq\eta\leq 1$, and let $\dz=(\dx;\dy;\ds)$ be the solution to \eqref{eq:eqnSysPred}. Then
\begin{equation}\label{eq:dxdsBndPred}
\|\dx\|_{\bx}\leq\kx\quad\text{and}\quad\|\ds\|_{\bx}^*\leq\ks\mu(z),
\end{equation}
where $\kx:=\eta+\sqrt{2\eta^2+\bnu}$ and $\ks:=\kx+\sqrt{\kx^2+\eta^2+\bnu}$.
\end{lemma}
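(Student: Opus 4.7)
The plan is to exploit the skew-symmetry of $G$ together with the logarithmic homogeneity of $\bF$; no self-concordance machinery is needed. For the first bound, I would rewrite \eqref{eq:eqnSysPred1} as the linear identity $G(y+\dy;\bx+\dx)=(0;\bs+\ds)$ and pair both sides with $(y+\dy;\bx+\dx)$. Because $G$ is skew-symmetric, the left-hand side vanishes, yielding the orthogonality
\[
\la\bx+\dx,\bs+\ds\ra=0.
\]
Substituting $\bs+\ds=-\mu(z)\bH(\bx)\dx$ from \eqref{eq:eqnSysPred2} reduces this to $\la\bx+\dx,\bH(\bx)\dx\ra=0$, and the identity $\bH(\bx)\bx=-\bg(\bx)$ from Theorem~\ref{thm:LHBprops} then collapses it to $\|\dx\|_{\bx}^{2}=\la\bg(\bx),\dx\ra$. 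One Cauchy--Schwarz step \eqref{eq:CS} together with $\|\bg(\bx)\|_{\bx}^{*}=\sqrt{\bnu}$ gives the clean bound $\|\dx\|_{\bx}\leq\sqrt{\bnu}$, which already satisfies the claim since $\kx=\eta+\sqrt{2\eta^{2}+\bnu}\geq\sqrt{\bnu}$.

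For the dual bound I would solve \eqref{eq:eqnSysPred2} for $\ds$ and apply the triangle inequality in the $\|\cdot\|_{\bx}^{*}$ norm, using $\|\bH(\bx)u\|_{\bx}^{*}=\|u\|_{\bx}$ to obtain
\[
\|\ds\|_{\bx}^{*}\leq\|\bs\|_{\bx}^{*}+\mu(z)\|\dx\|_{\bx}.
\]
To control $\|\bs\|_{\bx}^{*}$ I would decompose $\bs=\psi(\bx,\bs,\mu(z))-\mu(z)\bg(\bx)$, invoke the centrality assumption $\|\psi(\bx,\bs,\mu(z))\|_{\bx}^{*}\leq\eta\mu(z)$, and use $\|\bg(\bx)\|_{\bx}^{*}=\sqrt{\bnu}$ to conclude $\|\bs\|_{\bx}^{*}\leq(\eta+\sqrt{\bnu})\mu(z)$. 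Combining everything produces $\|\ds\|_{\bx}^{*}\leq(\kx+\eta+\sqrt{\bnu})\mu(z)$.

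The only remaining task is to recognize that this upper bound is no larger than $\ks\mu(z)$, i.e., $\eta+\sqrt{\bnu}\leq\sqrt{\kx^{2}+\eta^{2}+\bnu}$. Squaring turns the desired inequality into $2\eta\sqrt{\bnu}\leq\kx^{2}$; expanding $\kx^{2}=3\eta^{2}+\bnu+2\eta\sqrt{2\eta^{2}+\bnu}$ and using $\sqrt{2\eta^{2}+\bnu}\geq\sqrt{\bnu}$ settles this immediately. I expect the algebraic reconciliation with the specific form of $\ks$ in this last step to be the only mildly fiddly part; everything preceding it is a direct combination of skew-symmetry of $G$, the logarithmic homogeneity identities \eqref{eq:LHBprops}, and a single Cauchy--Schwarz inequality.
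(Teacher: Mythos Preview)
Your argument is correct. In fact, the paper does not prove this lemma at all; it simply cites equations (34)--(35) of \cite{SY2015}, so there is no ``paper's own proof'' to compare against beyond whatever the Skajaa--Ye derivation is. Your route---using that the full predictor step satisfies $G(y+\dy;\bx+\dx)=(0;\bs+\ds)$ exactly, then pairing with $(y+\dy;\bx+\dx)$ and invoking skew-symmetry---is a clean way to obtain the orthogonality $(\bx+\dx)^\top(\bs+\ds)=0$. Combined with \eqref{eq:eqnSysPred2} and \eqref{eq:LHBprops}, it gives $\|\dx\|_{\bx}^{2}=\bg(\bx)^\top\dx$, and hence $\|\dx\|_{\bx}\leq\sqrt{\bnu}$, which is \emph{strictly sharper} than the stated bound $\kx=\eta+\sqrt{2\eta^{2}+\bnu}$ whenever $\eta>0$. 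The form of $\kx$ suggests that the original Skajaa--Ye derivation instead produces a quadratic inequality of the type $\|\dx\|_{\bx}^{2}\leq 2\eta\|\dx\|_{\bx}+\eta^{2}+\bnu$ (presumably by bounding $\dx^\top\ds$ and $\dx^\top\bs$ separately rather than exploiting the exact feasibility of $z+\dz$), which explains why their constant picks up the $\eta$-dependent terms. Your second bound and the algebraic check that $\eta+\sqrt{\bnu}\leq\sqrt{\kx^{2}+\eta^{2}+\bnu}$ are also fine; note that had you carried the sharper $\|\dx\|_{\bx}\leq\sqrt{\bnu}$ into the dual estimate you would have obtained $\|\ds\|_{\bx}^{*}\leq(\eta+2\sqrt{\bnu})\mu(z)$, again tighter than $\ks\mu(z)$.

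One tiny quibble: the Cauchy--Schwarz step you need is $|\la\bg(\bx),\dx\ra|\leq\|\bg(\bx)\|_{\bx}^{*}\|\dx\|_{\bx}$, which is the dual-primal pairing version rather than literally \eqref{eq:CS}; it is of course equally standard.
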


\revise{
\begin{lemma}[see Lemma~3 in \cite{SY2015}]\label{lem:resRedPred}
Let $z=(\bx;y;\bs)\in\cK^\circ$, and let $\dz=(\dx;\dy;\ds)$ be the solution to \eqref{eq:eqnSysPred}. Choose $z^+=(\bx^+;y^+;\bs^+)=z+\af_p\dz$ for some scalar $\af_p$. Then
\begin{align}
G(y^+;\bx^+)-(0;\bs^+)&=(1-\af_p)(G(y;\bx)-(0;\bs))\qquad\text{ and }\label{eq:infRedPred}\\
\mu(z^+)&=(1-\af_p)(\mu(z)+\af_p\bnu^{-1}\psi^\top\dx).\label{eq:muRedPred}
\end{align}
\end{lemma}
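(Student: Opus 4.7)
The first identity follows immediately from the linearity of $G$: writing $(y^+;\bx^+)=(y;\bx)+\af_p(\dy;\dx)$ and $\bs^+=\bs+\af_p\ds$, we have
\[
G(y^+;\bx^+)-(0;\bs^+) = [G(y;\bx)-(0;\bs)] + \af_p[G(\dy;\dx)-(0;\ds)],
\]
and equation \eqref{eq:eqnSysPred1} identifies the second bracket with $-[G(y;\bx)-(0;\bs)]$, yielding \eqref{eq:infRedPred}.

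For the complementarity identity the plan is to proceed in two stages. Let $r:=G(y;\bx)-(0;\bs)$. The skew-symmetry of $G$ produces two scalar identities: first, $(y;\bx)^\top r = -\bx^\top\bs = -\bnu\mu(z)$ since $(y;\bx)^\top G(y;\bx)=0$; second, combining \eqref{eq:eqnSysPred1} with $(\dy;\dx)^\top G(\dy;\dx)=0$ gives $(\dy;\dx)^\top r = \dx^\top\ds$. Adding the first to $\af_p$ times the second yields $(y^+;\bx^+)^\top r = -\bnu\mu(z)+\af_p\dx^\top\ds$. Pairing the already-proved identity \eqref{eq:infRedPred} with $(y^+;\bx^+)$ and invoking skew-symmetry on the left produces $-(\bx^+)^\top\bs^+ = (1-\af_p)\bigl(-\bnu\mu(z)+\af_p\dx^\top\ds\bigr)$, hence
\[
\mu(z^+) = (1-\af_p)\bigl(\mu(z) - \af_p\bnu^{-1}\dx^\top\ds\bigr).
\]

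The remaining step — identifying $-\dx^\top\ds$ with $\psi^\top\dx$ — is the main subtlety, since it is \emph{not} a consequence of \eqref{eq:eqnSysPred2} alone: both blocks of \eqref{eq:eqnSysPred} are needed. Expanding the definition gives $\psi^\top\dx = \bs^\top\dx + \mu(z)\bg(\bx)^\top\dx$. Taking the inner product of \eqref{eq:eqnSysPred2} with $\bx$ and using $\bH(\bx)\bx=-\bg(\bx)$ from Theorem~\ref{thm:LHBprops} yields $\mu(z)\bg(\bx)^\top\dx = \bx^\top\ds + \bx^\top\bs$. On the other hand, applying \eqref{eq:infRedPred} with $\af_p=1$ gives $G(y+\dy;\bx+\dx)=(0;\bs+\ds)$; pairing this with $(y+\dy;\bx+\dx)$ via skew-symmetry produces the scalar relation $(\bx+\dx)^\top(\bs+\ds)=0$, equivalently $\bs^\top\dx+\bx^\top\ds = -\bx^\top\bs - \dx^\top\ds$. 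Substituting both of these into the expansion of $\psi^\top\dx$ telescopes to $\psi^\top\dx = -\dx^\top\ds$, and inserting this into the preceding formula for $\mu(z^+)$ gives \eqref{eq:muRedPred}.
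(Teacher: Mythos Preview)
Your proof is correct. The paper does not supply its own proof of this lemma; it is stated with the citation ``see Lemma~3 in \cite{SY2015}'' and left unproved. So there is no paper proof to compare against directly.

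That said, the paper \emph{does} prove the analogous corrector result (Lemma~\ref{lem:resRedCorr}), and the approach there is slightly different from yours. For the complementarity identity, the paper expands $(\bx^+)^\top\bs^+ - \bx^\top\bs$ directly and computes the cross-terms $\dx^\top\bs$ and $\bx^\top\ds$ separately by pairing \eqref{eq:eqnSysCorr2} with $\dx$ and $\bx$, using $\bH(\bx)\bx=-\bg(\bx)$ and $\psi^\top\bx=0$. Your argument instead works through the residual $r=G(y;\bx)-(0;\bs)$ and exploits skew-symmetry of $G$ at three different points (at $(y;\bx)$, at $(\dy;\dx)$, and at $(y^+;\bx^+)$) to obtain $\mu(z^+)=(1-\af_p)(\mu(z)-\af_p\bnu^{-1}\dx^\top\ds)$ in one stroke, before identifying $-\dx^\top\ds=\psi^\top\dx$. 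This is a genuinely different decomposition: the paper's route is a brute-force term-by-term expansion, while yours packages the $(1-\af_p)$ factor structurally via the already-proved residual reduction \eqref{eq:infRedPred}. Your method is arguably cleaner for the predictor case precisely because it reuses \eqref{eq:infRedPred}, whereas the direct expansion would require tracking the quadratic term $\af_p^2\dx^\top\ds$ explicitly and then factoring. Both approaches ultimately rely on the same two ingredients---skew-symmetry of $G$ and the barrier identity $\bH(\bx)\bx=-\bg(\bx)$---so the difference is organizational rather than conceptual.
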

}

\begin{lemma}[see (38-39) in \cite{SY2015}]\label{lem:muChgPred}
Let $z=(\bx;y;\bs)\in\cN(\eta)$ for some $0\leq\eta\leq 1$, and let $\dz=(\dx;\dy;\ds)$ be the solution to \eqref{eq:eqnSysPred}. Choose $z^+=(\bx^+;y^+;\bs^+):=z+\af_p\dz$ for some $0\leq\af_p<\kx^{-1}$. Then
\begin{gather}
|\mu(z^+)-\mu(z)|\leq\mu(z)\af_p\left(1+(1-\af_p)\eta\kx\bnu^{-1}\right),\label{eq:muDiffPred}\\
\revise{\left(1-\af_p\right)\left(1-\af_p\eta\kx\bnu^{-1}\right)\leq\frac{\mu(z^+)}{\mu(z)}\leq\left(1-\af_p\right)\left(1+\af_p\eta\kx\bnu^{-1}\right).}\label{eq:muRatioPred}
\end{gather}
\end{lemma}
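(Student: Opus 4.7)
The plan is to derive both inequalities from the explicit formula for $\mu(z^+)$ provided by Lemma~\ref{lem:resRedPred}, combined with the local-norm bounds on $\psi$ and $\dx$ from the centrality condition and Lemma~\ref{lem:dxdsBndPred}. Write $\psi:=\psi(\bx,\bs,\mu(z))$ for brevity. By Lemma~\ref{lem:resRedPred},
\[
\mu(z^+)=(1-\af_p)\bigl(\mu(z)+\af_p\bnu^{-1}\psi^\top\dx\bigr),
\]
so everything reduces to bounding the scalar $\psi^\top\dx$.

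The key estimate is an application of the Cauchy--Schwarz inequality \eqref{eq:CS} in the dual pairing of the local norms $\|\cdot\|_{\bx}$ and $\|\cdot\|_{\bx}^*$: one has
\[
|\psi^\top\dx|=|\la\psi,\dx\ra|\leq\|\psi\|_{\bx}^*\,\|\dx\|_{\bx}.
\]
The centrality hypothesis $z\in\cN(\eta)$ gives $\|\psi\|_{\bx}^*\leq\eta\mu(z)$, while Lemma~\ref{lem:dxdsBndPred} gives $\|\dx\|_{\bx}\leq\kx$. Therefore $|\psi^\top\dx|\leq\eta\kx\mu(z)$, and consequently
\[
\left|\af_p\bnu^{-1}\frac{\psi^\top\dx}{\mu(z)}\right|\leq\af_p\eta\kx\bnu^{-1}.
\]
Dividing the identity above by $\mu(z)$ (which is positive since $z\in\cF^\circ$, implicit in $z\in\cN(\eta)$) and inserting this two-sided bound immediately yields \eqref{eq:muRatioPred}. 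For \eqref{eq:muDiffPred}, rewrite
\[
\mu(z^+)-\mu(z)=-\af_p\mu(z)+(1-\af_p)\af_p\bnu^{-1}\psi^\top\dx,
\]
take absolute values, and apply the triangle inequality together with the same bound on $|\psi^\top\dx|$. This is where the hypothesis $\af_p<\kx^{-1}\leq 1$ is used implicitly to ensure $1-\af_p\geq 0$ so the sign of the second term can be controlled cleanly.

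There is no substantial obstacle; the argument is a direct calculation. The only thing to be careful about is recognizing that the quantity $\psi^\top\dx$ appearing in \eqref{eq:muRedPred} is precisely of the form $\la\psi,\dx\ra$ so that Cauchy--Schwarz in the $\bH(\bx)$-inner product applies with the correct pair of dual local norms, and that the centrality measure is exactly $\|\psi\|_{\bx}^*$ (not $\|\psi\|_{\bx}$), which matches the dual norm needed to pair with $\|\dx\|_{\bx}$.
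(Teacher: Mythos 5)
Your proof is correct and is essentially the standard derivation that underlies (38)--(39) in \cite{SY2015}, which this paper cites rather than reproves: you substitute the two-sided Cauchy--Schwarz bound $|\psi^\top\dx|\leq\|\psi\|_{\bx}^*\|\dx\|_{\bx}\leq\eta\kx\mu(z)$ into the exact expression \eqref{eq:muRedPred} for $\mu(z^+)$, using $\af_p<\kx^{-1}\leq 1$ to keep the sign of $1-\af_p$. Nothing is missing.
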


It is immediate from the definitions of $\kx$ and $\ks$ that $2\kx\leq\ks$. Our next remark provides a simple upper bound on $\ks$ in terms of $\kx$.

\begin{remark}
Suppose $\kx$ and $\ks$ are defined as in Lemma~\ref{lem:dxdsBndPred}. Then
\begin{equation}\label{eq:ksBnd}
\ks\leq(1+\sqrt{2})\kx.
\end{equation}
\end{remark}
\begin{proof}
The definition of $\kx$ implies $\kx^2\geq\eta^2+\bnu$. Then $\ks^2=\kx^2+2\kx\sqrt{\eta^2+\bnu+\kx^2}+\eta^2+\bnu+\kx^2\leq(3+2\sqrt{2})\kx^2$.
Taking the square roots of both sides produces the claimed result.
\end{proof}


The next lemma shows that the updated solution after a predictor step belongs to $\cF^\circ$ if the step size is small enough. The first part of the lemma is from \cite{SY2015}, whereas the second part presents a slight strengthening of a similar result from \cite{SY2015} which requires $\af_p<(1-\eta)\ks^{-1}$ (see the section ``Feasibility of $z^+$" of Appendix~3 in \cite{SY2015}).

\begin{lemma}\label{lem:inPred}
Let $z=(\bx;y;\bs)\in\cN(\eta)$ for some $0\leq\eta\leq 1$, and let $\dz=(\dx;\dy;\ds)$ be the solution to \eqref{eq:eqnSysPred}. Choose $z^+=(\bx^+;y^+;\bs^+):=z+\af_p\dz$ for some $0\leq\af_p<\kx^{-1}$. Then the following statements are true:
\begin{enumerate}[label=\roman*.]
\item $\bx^+\in\bK^\circ$.
\item If $\|\psi(z^+)\|_{\bx^+}^*\leq\beta\mu(z^+)$ for some $0\leq\beta<1$, then $\bs^+\in(\bK^*)^\circ$.
\end{enumerate}
\end{lemma}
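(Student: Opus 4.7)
For part (i), I would observe that $z\in\cN(\eta)\subset\cF^\circ$ places $\bx$ in $\bK^\circ$; combining the estimate $\|\dx\|_{\bx}\leq\kx$ from Lemma~\ref{lem:dxdsBndPred} with $\af_p<\kx^{-1}$ gives $\|\bx^+-\bx\|_{\bx}=\af_p\|\dx\|_{\bx}<1$, so self-concordance of $\bF$ yields $\bx^+\in B_{\bx}(\bx,1)\subset\bK^\circ$.

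For part (ii), the plan is to reduce the membership $\bs^+\in(\bK^*)^\circ$ to the self-concordance of $\bF^*$ at the point $-\bg(\bx^+)$, which lies in $(\bK^*)^\circ$ by part~(i) and Theorem~\ref{thm:SCsymm}(i). Rescaling is essential: I would introduce $\bs':=\bs^+/\mu(z^+)$, so that by the definition of $\psi$,
\[
\bs'-(-\bg(\bx^+))=\bs^+/\mu(z^+)+\bg(\bx^+)=\psi(z^+)/\mu(z^+).
\]
The norm identity \eqref{eq:dNorms} gives $\|\cdot\|_{-\bg(\bx^+)}^*=\|\cdot\|_{\bx^+}^*$, so the hypothesis $\|\psi(z^+)\|_{\bx^+}^*\leq\beta\mu(z^+)$ translates into
\[
\|\bs'-(-\bg(\bx^+))\|_{-\bg(\bx^+)}^*=\mu(z^+)^{-1}\|\psi(z^+)\|_{\bx^+}^*\leq\beta<1.
\]
Self-concordance of $\bF^*$, available via Theorem~\ref{thm:SCconj}, then places $\bs'$ in $(\bK^*)^\circ$; since $(\bK^*)^\circ$ is a cone, this yields $\bs^+=\mu(z^+)\bs'\in(\bK^*)^\circ$ as soon as $\mu(z^+)>0$.

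The remaining step is verifying that $\mu(z^+)>0$, which is the only spot requiring a computation and the obvious potential pitfall. I would extract it from the lower bound in \eqref{eq:muRatioPred}: under $0\leq\af_p<\kx^{-1}$, $\eta\leq 1$, and $\bnu\geq 2$, both factors $(1-\af_p)$ and $(1-\af_p\eta\kx\bnu^{-1})$ are strictly positive, while $\mu(z)>0$ follows from $\bx\in\bK^\circ$ and $\bs\in(\bK^*)^\circ$. No genuine difficulty is expected beyond this: the core of the argument is the translation between primal and dual local geometries via \eqref{eq:dNorms}, together with the observation that this translation is compatible with positive rescaling of the dual iterate.
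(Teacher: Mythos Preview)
Your proposal is correct and follows essentially the same route as the paper: part~(i) is exactly the Dikin-ball argument (which the paper defers to \cite{SY2015}), and part~(ii) matches the paper's proof almost line for line, rescaling $\bs^+$ by $\mu(z^+)$, invoking \eqref{eq:dNorms} to pass to the dual local norm at $-\bg(\bx^+)$, and appealing to self-concordance of $\bF^*$. The only cosmetic difference is that you establish $\mu(z^+)>0$ via the lower bound in \eqref{eq:muRatioPred}, whereas the paper derives it from \eqref{eq:muDiffPred}; both are immediate.
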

\begin{proof}
The proof of claim~(i) can be found in Appendix~3 of \cite{SY2015}. For claim~(ii), let $\mu:=\mu(z)$, $\mu^+:=\mu(z^+)$, and $\psi^+:=\psi(\bx^+,\bs^+,\mu^+)=\bs^+ +\mu^+\bg(\bx^+)$.
Note that $\mu>0$ because $\bx\in\bK^\circ$ and $\bs\in(\bK^*)^\circ$. We claim $\mu^+>0$. To see this, note first that $0\leq\eta\bnu^{-1}<1$ for $\bnu\geq 2$ and $0\leq\eta\leq 1$. Combining this inequality with $0\leq\af_p\kx<1$, we get $0\leq\af_p\kx\eta\bnu^{-1}<1$. Furthermore, we have $\af_p<\kx^{-1}<1$ because $\kx\geq\sqrt{\bnu}>1$. Then
\[
\af_p\left(1+(1-\af_p)\kx\eta\bnu^{-1}\right)=\af_p+(1-\af_p)\af_p\kx\eta\bnu^{-1}<\af_p+(1-\af_p)=1.
\]
Together with \eqref{eq:muDiffPred}, this inequality gives $\mu-\mu^+<\mu$, which implies the desired $\mu^+>0$. From Theorem~\ref{thm:SCsymm}(i), recall that $\bx^+\in\bK^\circ$ implies $-\bg(\bx^+)\in(\bK^*)^\circ$. Using $\mu^+>0$, we can write
\[
\|\bs^+/\mu^+ + \bg(\bx^+)\|_{-\bg(\bx^+)}^*\overset{\eqref{eq:dNorms}}{=}\|\bs^+/\mu^+ + \bg(\bx^+)\|_{\bx^+}^* =(\mu^+)^{-1}\|\psi^+\|_{\bx^+}^*\leq\beta<1.
\]
The conclusion $\bs^+\in(\bK^*)^\circ$ now follows from the last inequality together with $-\bg(\bx^+)\in(\bK^*)^\circ$ and the self-concordance of $\bF^*$.
\end{proof}


The next proposition is the main result of this section.

\begin{proposition}\label{prop:psiBndPred}
Let $z=(\bx;y;\bs)\in\cN(\eta)$ for some $0\leq\eta\leq 1$, and let $\dz=(\dx;\dy;\ds)$ be the solution to \eqref{eq:eqnSysPred}. Choose $z^+=(\bx^+;y^+;\bs^+):=z+\af_p\dz$ for some $\af_p>0$ such that $c_p:=\af_p\kx<1$. Then
\begin{equation}\label{eq:psiBndPred}
\mu(z^+)^{-1}\|\psi(z^+)\|_{\bx^+}^*\leq\frac{c_p}{(1-c_p)^2}+\frac{2\eta(\sqrt{2}+c_p)+4(1+\sqrt{2})c_p}{(1-c_p)(\sqrt{2}-c_p)(2-c_p\eta)}.
\end{equation}
\end{proposition}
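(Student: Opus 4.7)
The plan is to bound $\|\psi(z^+)\|_{\bx}^*$ by decomposing $\psi(z^+)$ into four summands, each controllable by a tool from Section~\ref{sec:pre}, and then passing to $\|\cdot\|_{\bx^+}^*$ via Lemma~\ref{lem:dNormBaseChg}. Substituting $\bs^+=\bs+\af_p\ds$ into $\psi(z^+)=\bs^+ + \mu(z^+)\bg(\bx^+)$ and adding and subtracting $\mu(z)\bg(\bx)$ yields
\[
\psi(z^+)=\psi(z)+\af_p\ds+\bigl(\mu(z^+)-\mu(z)\bigr)\bg(\bx)+\mu(z^+)\bigl(\bg(\bx^+)-\bg(\bx)\bigr).
\]

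I would bound each summand in the dual local norm $\|\cdot\|_{\bx}^*$. The first is at most $\eta\mu(z)$ since $z\in\cN(\eta)$. For the second, Lemma~\ref{lem:dxdsBndPred} and \eqref{eq:ksBnd} give $\|\af_p\ds\|_{\bx}^*\leq\af_p\ks\mu(z)\leq(1+\sqrt{2})c_p\mu(z)$. For the third, \eqref{eq:muDiffPred} together with $\|\bg(\bx)\|_{\bx}^*=\sqrt{\bnu}$ from Theorem~\ref{thm:LHBprops} and the elementary bounds $\af_p\sqrt{\bnu}\leq\af_p\kx=c_p$ and $\sqrt{\bnu}\geq\sqrt{2}$ give $\|(\mu(z^+)-\mu(z))\bg(\bx)\|_{\bx}^*\leq c_p\mu(z)+(1-\af_p)c_p\eta\mu(z)/\sqrt{2}$. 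Finally, $\|\bx^+-\bx\|_{\bx}=\af_p\|\dx\|_{\bx}\leq c_p<1$, so Lemma~\ref{lem:gradBnd} yields $\|\bg(\bx^+)-\bg(\bx)\|_{\bx}^*\leq c_p/(1-c_p)$, bounding the fourth summand in norm by $\mu(z^+)c_p/(1-c_p)$.

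After summing, Lemma~\ref{lem:dNormBaseChg} produces an extra factor of $1/(1-c_p)$ in passing to $\|\cdot\|_{\bx^+}^*$, and I would then divide by $\mu(z^+)$. To control the ratios $\mu(z)/\mu(z^+)$ and $(1-\af_p)\mu(z)/\mu(z^+)$, I would invoke the lower bound in \eqref{eq:muRatioPred} with $\bnu\geq 2$, giving $(1-\af_p)\mu(z)/\mu(z^+)\leq 2/(2-c_p\eta)$, and use $\kx\geq\sqrt{\bnu}\geq\sqrt{2}$ to obtain $1/(1-\af_p)\leq\sqrt{2}/(\sqrt{2}-c_p)$, hence $\mu(z)/\mu(z^+)\leq 2\sqrt{2}/((\sqrt{2}-c_p)(2-c_p\eta))$. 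The fourth summand contributes exactly $c_p/(1-c_p)^2$; the remaining three assemble on the common denominator $(1-c_p)(\sqrt{2}-c_p)(2-c_p\eta)$ with total numerator $2\sqrt{2}\eta+4(1+\sqrt{2})c_p+\sqrt{2}c_p\eta(\sqrt{2}-c_p)$, which is bounded above by $2\eta(\sqrt{2}+c_p)+4(1+\sqrt{2})c_p$ by dropping a nonpositive $-\sqrt{2}c_p^2\eta$ term.

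The hard part will be the bookkeeping in the final step, since each of the three $\mu(z)$-proportional summands carries a different combination of $(1-\af_p)$, $\sqrt{\bnu}$, and $\bnu$ factors. These factors must be converted consistently using $\af_p\leq c_p/\sqrt{2}$, $\bnu\geq 2$, and $\sqrt{\bnu}\geq\sqrt{2}$ so that all three collapse onto the common denominator $(1-c_p)(\sqrt{2}-c_p)(2-c_p\eta)$; once this alignment is achieved, the claimed inequality follows by elementary algebra.
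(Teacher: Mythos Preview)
Your proposal is correct and follows essentially the same route as the paper's proof: the same four-term decomposition of $\psi(z^+)$, the same tools (Theorem~\ref{thm:LHBprops}, Lemmas~\ref{lem:dNormBaseChg}, \ref{lem:gradBnd}, \ref{lem:dxdsBndPred}, \ref{lem:muChgPred}, and the bound \eqref{eq:ksBnd}), and the same passage from $\|\cdot\|_{\bx}^*$ to $\|\cdot\|_{\bx^+}^*$ via Lemma~\ref{lem:dNormBaseChg}. The only cosmetic difference is that the paper keeps $\bnu$ symbolic throughout and substitutes $\bnu\geq 2$ only at the very end, whereas you invoke $\bnu\geq 2$ at intermediate stages; both lead to the identical final bound.
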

\begin{proof}
Let $\mu:=\mu(z)$, $\mu^+:=\mu(z^+)$, $\psi:=\psi(\bx,\bs,\mu)=\bs +\mu\bg(\bx)$, and $\psi^+:=\psi(\bx^+,\bs^+,\mu^+)=\bs^+ +\mu^+\bg(\bx^+)$. Lemma~\ref{lem:inPred}(i) shows that $\bx^+\in\bK^\circ$ under the hypotheses of this proposition. Furthermore, $z\in\cN(\eta)$ implies $\|\psi\|_{\bx}^*\leq\eta\mu$. We use Theorem~\ref{thm:LHBprops} and Lemma~\ref{lem:dxdsBndPred} together with the triangle inequality to obtain
\begin{align*}
\|\psi^+\|_{\bx}^*
&=\|\psi +\mu^+(\bg(\bx^+)-\bg(\bx))+(\mu^+-\mu)\bg(\bx)+(\bs^+-\bs)\|_{\bx}^*\\
&\leq\|\psi\|_{\bx}^*+\mu^+\|\bg(\bx^+)-\bg(\bx)\|_{\bx}^*+|\mu^+-\mu|\|\bg(\bx)\|_{\bx}^*+\|\bs^+-\bs\|_{\bx}^*\\
&\hspace{-0.75em}\overset{(\ref{eq:LHBprops},\ref{eq:dxdsBndPred})}{\leq}
\eta\mu+\mu^+\|\bg(\bx^+)-\bg(\bx)\|_{\bx}^*+|\mu^+-\mu|\sqrt{\bnu}+\af_p\ks\mu.
\end{align*}
Using Lemmas~\ref{lem:gradBnd} and \ref{lem:muChgPred} to bound the right-hand side of this chain yields
\begin{align*}
\|\psi^+\|_{\bx}^*
&\hspace{-0.25em}\overset{(\ref{eq:gradBnd},\ref{eq:muDiffPred})}{\leq}\frac{\af_p\kx\mu^+}{1-\af_p\kx}+\mu\left(\eta +\af_p\ks+\af_p(\sqrt{\bnu}+(1-\af_p)\eta\kx\bnu^{-1/2})\right)\\
&\overset{\eqref{eq:muRatioPred}}{\leq}\frac{\af_p\kx\mu^+}{1-\af_p\kx}+\frac{\mu^+\left(\eta +\af_p\ks+\af_p\sqrt{\bnu}+\af_p(1-\af_p)\eta\kx\bnu^{-1/2}\right)}{(1-\af_p)(1-\af_p\eta\kx\bnu^{-1})}.
\end{align*}
Note that $0\leq\af_p\eta\kx<1$ because $0\leq\eta\leq 1$ and $0<\af_p\kx<1$. We can use the bounds $\af_p>0$, $\sqrt{\bnu}\leq\kx$, and $\ks\leq(1+\sqrt{2})\kx$ (see \eqref{eq:ksBnd}) to obtain
\begin{align}
(1/\mu^+)\|\psi^+\|_{\bx}^*
&\leq\frac{\af_p\kx}{1-\af_p\kx}+\frac{\eta +(2+\sqrt{2})\af_p\kx+\af_p\eta\kx\bnu^{-1/2}}{(1-\af_p\kx\bnu^{-1/2})(1-\af_p\eta\kx\bnu^{-1})}\notag\\
&=\frac{c_p}{1-c_p}+\frac{\eta+(2+\sqrt{2})c_p+c_p\eta\bnu^{-1/2}}{(1-c_p\bnu^{-1/2})(1-c_p\eta\bnu^{-1})}.\label{eq:psiBndPredPrf}
\end{align}
Now recall from Lemmas~\ref{lem:dNormBaseChg} and \ref{lem:dxdsBndPred} that
\[
\|\psi^+\|_{\bx^+}^*
\overset{\eqref{eq:dNormBaseChg}}{\leq}(1-\af_p\|\dx\|)^{-1}\|\psi^+\|_{\bx}^*
\overset{\eqref{eq:dxdsBndPred}}{\leq}(1-\af_p\kx)^{-1}\|\psi^+\|_{\bx}^*
=(1-c_p)^{-1}\|\psi^+\|_{\bx}^*.
\]
Combining this inequality with \eqref{eq:psiBndPredPrf} produces
\[
(\mu^+)^{-1}\|\psi^+\|_{\bx^+}^*
\leq\frac{c_p}{(1-c_p)^2}+\frac{\eta(1+c_p\bnu^{-1/2})+(2+\sqrt{2})c_p}{(1-c_p)(1-c_p\bnu^{-1/2})(1-c_p\eta\bnu^{-1})}.
\]
Recall that $\bnu\geq 2$. Observing that the right-hand side of this inequality is monotone nonincreasing in $\bnu$ for $\bnu\geq 2$ and replacing $\bnu$ with $2$ yields \eqref{eq:psiBndPred}.
\end{proof}

Our next corollary implies Lemma~\ref{lem:newLemma5} stated at the beginning of Section~\ref{sec:algo}.
\begin{corollary}
Assume one of the following:
\begin{itemize}
\item $\beta=0.20$, $\epsilon=0.50$, $r_c=1$, $\eta=\beta\epsilon^{r_c}$, and $\af_p=0.020\kx^{-1}$, or
\item $\beta=0.25$, $\epsilon=0.70$, $r_c=2$, $\eta=\beta\epsilon^{r_c}$, and $\af_p=0.025\kx^{-1}$.
\end{itemize}
Let $z=(\bx;y;\bs)\in\cN(\eta)$, and let $\dz=(\dx;\dy;\ds)$ be the solution to \eqref{eq:eqnSysPred}. Then $z^+=(\bx^+;y^+;\bs^+):=z+\af_p\dz\in\cN(\beta)$.
\end{corollary}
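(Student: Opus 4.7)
The plan is to derive the corollary as a direct numerical consequence of Proposition~\ref{prop:psiBndPred} together with Lemma~\ref{lem:inPred}. Set $c_p:=\af_p\kx$, which by hypothesis equals $0.020$ in the first case and $0.025$ in the second. In particular $c_p<1$, so the hypothesis $\af_p<\kx^{-1}$ of Lemma~\ref{lem:inPred} and Proposition~\ref{prop:psiBndPred} is satisfied, and from Lemma~\ref{lem:inPred}(i) we immediately obtain $\bx^+\in\bK^\circ$. It remains to verify the central path neighborhood bound
\[
(\mu(z^+))^{-1}\|\psi(z^+)\|_{\bx^+}^{*}\leq \beta,
\]
and then conclude $\bs^+\in(\bK^*)^\circ$ via Lemma~\ref{lem:inPred}(ii), so that $z^+\in\cF^\circ$ and consequently $z^+\in\cN(\beta)$.

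For the neighborhood bound, I will substitute the given numerical values of $\eta$ and $c_p$ into the right-hand side of \eqref{eq:psiBndPred}. In the first case, $\eta=\beta\epsilon^{r_c}=0.20\cdot 0.50=0.10$ and $c_p=0.020$, giving a right-hand side that evaluates to approximately $0.021+0.176<0.20=\beta$. In the second case, $\eta=0.25\cdot 0.49=0.1225$ and $c_p=0.025$, giving approximately $0.027+0.220<0.25=\beta$. The key point is that the first summand $c_p/(1-c_p)^2$ is tiny because $c_p$ is very small, while the second summand is controlled by the $(2+\sqrt{2})c_p$ and $2\eta\sqrt{2}$ contributions in the numerator against an almost-constant denominator close to $2\sqrt{2}$. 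Thus the bound holds with a comfortable margin in both cases, which is the only calculation of substance.

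Once the inequality $(\mu(z^+))^{-1}\|\psi(z^+)\|_{\bx^+}^{*}\leq\beta<1$ is established, Lemma~\ref{lem:inPred}(ii) yields $\bs^+\in(\bK^*)^\circ$. Combined with $\bx^+\in\bK^\circ$ and the fact that $y^+\in\R^m$ trivially, this places $z^+$ in $\cF^\circ$. The same inequality is the definition of $z^+\in\cN(\beta)$, which completes the argument.

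The main (and only real) obstacle is the numerical verification that the expression on the right-hand side of \eqref{eq:psiBndPred} stays below $\beta$ at the chosen parameters; this is the reason the specific constants $\af_p=0.020\kx^{-1}$ and $\af_p=0.025\kx^{-1}$ were picked. No new theory is needed: Proposition~\ref{prop:psiBndPred} and Lemma~\ref{lem:inPred} already isolate all analytical content, and the corollary is their direct specialization.
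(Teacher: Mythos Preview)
Your proposal is correct and follows essentially the same approach as the paper: invoke Lemma~\ref{lem:inPred}(i) for $\bx^+\in\bK^\circ$, plug the given $\eta$ and $c_p$ into the bound of Proposition~\ref{prop:psiBndPred} to verify numerically that it is at most $\beta$, and then apply Lemma~\ref{lem:inPred}(ii) to conclude $\bs^+\in(\bK^*)^\circ$ and hence $z^+\in\cN(\beta)$. The paper's proof is simply a terser statement of the same verification.
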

\begin{proof}
It can be verified from Lemma~\ref{lem:inPred}(i) and Proposition~\ref{prop:psiBndPred} that $\bx^+\in\bK^\circ$ and $\|\psi(\bx^+,\bs^+,\mu(z^+))\|_{\bx^+}^*\allowbreak\leq\beta\mu(z^+)$ with the prescribed parameters. Because $\beta<1$, Lemma~\ref{lem:inPred}(ii) further implies that $\bs^+\in(\bK^*)^\circ$.
\end{proof}

\subsection{Correction}

In this section, we present the proof for Lemma~\ref{lem:newLemma6}. Recall that the corrector phase of Algorithm~\ref{alg:SY} consists of $r_c$ successive corrector steps. In the paper \cite{SY2015}, the corrector update direction $(\dx;\dy;\ds)$ is computed at each of these steps as the solution to the linear system
\begin{subequations}\label{eq:eqnSysCorr}
\begin{alignat}{1}
G(\dy;\dx)-(0;\ds)&=0,\label{eq:eqnSysCorr1}\\
\ds+\mu(z)\bH(\bx)\dx&=-\psi(z).\label{eq:eqnSysCorr2}
\end{alignat}
\end{subequations}
Since $G$ is skew-symmetric, multiplying \eqref{eq:eqnSysCorr1} with $(\dy;\dx)^\top$ from the left demonstrates that every solution to \eqref{eq:eqnSysCorr} satisfies
\begin{equation}\label{eq:dxds}
\dx^\top\ds=0.
\end{equation}
The next lemma summarizes useful results from \cite{SY2015} about the corrector update direction.

\begin{lemma}[see (44-45) in \cite{SY2015}]\label{lem:dxdsBndCorr}
Let $z=(\bx;y;\bs)\in\cN(\theta)$ for some $0\leq\theta\leq 1$, and let $\dz=(\dx;\dy;\ds)$ be the solution to \eqref{eq:eqnSysCorr}. Then
\begin{equation}\label{eq:dxdsBndCorr}
\|\dx\|_{\bx}\leq\theta\quad\text{and}\quad\|\ds\|_{\bx}^*\leq\theta\mu(z).
\end{equation}
If in addition $0\leq\af_c\leq 1$, then
\begin{equation}\label{eq:psiWithNewSBnd}
\|\psi(z)+\af_c\ds\|_{\bx}^*\leq\theta\mu(z).
\end{equation}
\end{lemma}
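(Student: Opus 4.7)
The plan is to leverage the corrector equations \eqref{eq:eqnSysCorr1}--\eqref{eq:eqnSysCorr2} together with the orthogonality relation \eqref{eq:dxds} $\dx^\top\ds=0$ to express everything in terms of $\psi(z)$ and $\bH(\bx)\dx$, and then use the hypothesis $\|\psi(z)\|_{\bx}^*\leq\theta\mu(z)$ directly.

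First I would obtain the bound $\|\dx\|_{\bx}\leq\theta$. Taking the inner product of \eqref{eq:eqnSysCorr2} with $\dx$ and substituting \eqref{eq:dxds} gives
\[
0=\dx^\top\ds=-\dx^\top\psi(z)-\mu(z)\la\dx,\dx\ra_{\bH(\bx)}=-\dx^\top\psi(z)-\mu(z)\|\dx\|_{\bx}^2,
\]
so $\mu(z)\|\dx\|_{\bx}^2=-\dx^\top\psi(z)$. Applying the dual-norm Cauchy--Schwarz inequality $|\dx^\top\psi(z)|\leq\|\dx\|_{\bx}\,\|\psi(z)\|_{\bx}^*$ and the neighborhood hypothesis $\|\psi(z)\|_{\bx}^*\leq\theta\mu(z)$ then yields $\|\dx\|_{\bx}\leq\theta$ after cancellation.

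Next I would treat $\|\ds\|_{\bx}^*$ and $\|\psi(z)+\af_c\ds\|_{\bx}^*$ uniformly. From \eqref{eq:eqnSysCorr2}, $\ds=-\psi(z)-\mu(z)\bH(\bx)\dx$, so for any scalar $\af$,
\[
\psi(z)+\af\,\ds=(1-\af)\psi(z)-\af\mu(z)\bH(\bx)\dx.
\]
Expanding $(\|\psi(z)+\af\ds\|_{\bx}^*)^2$ using $\bH(\bx)^{-1}$, noting that $\|\bH(\bx)\dx\|_{\bx}^*=\|\dx\|_{\bx}$ and that the cross term contains $\psi(z)^\top\dx=-\mu(z)\|\dx\|_{\bx}^2$, the cross term becomes \emph{positive} and combines cleanly with the remaining terms to give
\[
(\|\psi(z)+\af\ds\|_{\bx}^*)^2=(1-\af)^2(\|\psi(z)\|_{\bx}^*)^2+\af(2-\af)\,\mu(z)^2\|\dx\|_{\bx}^2.
\]
The coefficients $(1-\af)^2$ and $\af(2-\af)$ sum to $1$ and are both nonnegative for $0\leq\af\leq 1$, so the right-hand side is a convex combination of $(\|\psi(z)\|_{\bx}^*)^2$ and $\mu(z)^2\|\dx\|_{\bx}^2$. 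Both are bounded by $\theta^2\mu(z)^2$ (the first by hypothesis, the second by the already-established $\|\dx\|_{\bx}\leq\theta$), hence $\|\psi(z)+\af\ds\|_{\bx}^*\leq\theta\mu(z)$ for every $\af\in[0,1]$. Specializing to $\af=1$ gives $\|\ds\|_{\bx}^*\leq\theta\mu(z)$, completing \eqref{eq:dxdsBndCorr}, and specializing to $\af=\af_c$ gives \eqref{eq:psiWithNewSBnd}.

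The only mildly delicate point is the sign bookkeeping in the expansion that produces the convex combination — it is essential that the linear term $2(1-\af)\af\mu(z)\,\dx^\top\psi(z)$ combine with $\af^2\mu(z)^2\|\dx\|_{\bx}^2$ via the identity $\dx^\top\psi(z)=-\mu(z)\|\dx\|_{\bx}^2$ to yield the coefficient $\af(2-\af)$ rather than something less controllable. Once that algebraic identity is in place, the remainder of the argument is essentially the observation that convex combinations preserve upper bounds.
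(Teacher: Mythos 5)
Your derivations of $\|\dx\|_{\bx}\leq\theta$ and of \eqref{eq:psiWithNewSBnd} are correct: the identity $\mu(z)\|\dx\|_{\bx}^2=-\dx^\top\psi(z)$ obtained from \eqref{eq:eqnSysCorr2} and \eqref{eq:dxds}, followed by Cauchy--Schwarz (and division by $\mu(z)\|\dx\|_{\bx}$, which is legitimate since $\mu(z)>0$ for $z\in\cF^\circ$ and the case $\dx=0$ is trivial), gives the first bound, and your convex-combination identity
\[
(\|\psi(z)+\af\ds\|_{\bx}^*)^2=(1-\af)^2(\|\psi(z)\|_{\bx}^*)^2+\af(2-\af)\,\mu(z)^2\|\dx\|_{\bx}^2
\]
checks out and yields \eqref{eq:psiWithNewSBnd} for every $\af\in[0,1]$. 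However, the step that is supposed to give $\|\ds\|_{\bx}^*\leq\theta\mu(z)$ fails as written: setting $\af=1$ in your identity bounds $\|\psi(z)+\ds\|_{\bx}^*$ (which equals $\mu(z)\|\dx\|_{\bx}$), not $\|\ds\|_{\bx}^*$, and no value of $\af$ makes $(1-\af)\psi(z)-\af\mu(z)\bH(\bx)\dx$ equal to $\pm\ds$. The repair is a one-line computation in exactly the same spirit: expanding $\ds=-\psi(z)-\mu(z)\bH(\bx)\dx$ directly,
\[
(\|\ds\|_{\bx}^*)^2=(\|\psi(z)\|_{\bx}^*)^2+2\mu(z)\,\psi(z)^\top\dx+\mu(z)^2\|\dx\|_{\bx}^2=(\|\psi(z)\|_{\bx}^*)^2-\mu(z)^2\|\dx\|_{\bx}^2\leq(\|\psi(z)\|_{\bx}^*)^2\leq\theta^2\mu(z)^2,
\]
where the middle equality again uses $\psi(z)^\top\dx=-\mu(z)\|\dx\|_{\bx}^2$. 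Note that here the cross term is \emph{negative}, which is why the bound holds; this is the sign bookkeeping you flagged as delicate, and it goes the right way.

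For context, the paper imports this lemma from (44--45) of Skajaa--Ye without reproving it; the standard argument there is the Pythagorean identity $(\|\ds\|_{\bx}^*)^2+\mu(z)^2\|\dx\|_{\bx}^2=(\|\ds+\mu(z)\bH(\bx)\dx\|_{\bx}^*)^2=(\|\psi(z)\|_{\bx}^*)^2$, an immediate consequence of $\dx^\top\ds=0$, which delivers both inequalities in \eqref{eq:dxdsBndCorr} at once with no Cauchy--Schwarz. Your computations are algebraically equivalent to this after substituting for $\ds$, just organized around $\psi(z)$ and $\dx$ rather than around $\ds$; once the $\|\ds\|_{\bx}^*$ step is fixed as above, the proof is complete.
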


\revise{
The next lemma shows how the linear residuals and the complementarity gap change as a result of a corrector step.

\begin{lemma}\label{lem:resRedCorr}
Let $z=(\bx;y;\bs)\in\cK^\circ$, and let $\dz=(\dx;\dy;\ds)$ be the solution to \eqref{eq:eqnSysCorr}. Choose $z^+=(\bx^+;y^+;\bs^+)=z+\af_c\dz$ for some scalar $\af_c$. Then
\begin{align}
G(y^+;\bx^+)-(0;\bs^+)&=G(y;\bx)-(0;\bs)\qquad\text{ and }\label{eq:infRedCorr}\\
\mu(z^+)&=\mu(z)-\af_c\bnu^{-1}\mu\|\dx\|_{\bx}^2.\label{eq:muRedCorr}
\end{align}
\end{lemma}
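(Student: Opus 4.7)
The first identity \eqref{eq:infRedCorr} follows immediately from the linearity of $G$. Writing $(y^+;\bx^+) = (y;\bx) + \af_c(\dy;\dx)$ and $\bs^+ = \bs + \af_c\ds$, one has
\[
G(y^+;\bx^+) - (0;\bs^+) = [G(y;\bx)-(0;\bs)] + \af_c\,[G(\dy;\dx)-(0;\ds)],
\]
and the second bracket vanishes by \eqref{eq:eqnSysCorr1}. So this part is essentially immediate.

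For \eqref{eq:muRedCorr}, the plan is to expand the complementarity gap and exploit \eqref{eq:dxds} together with the logarithmic homogeneity identities in Theorem~\ref{thm:LHBprops}. Since $\bnu\,\mu(z^+) = \bx^{+\top}\bs^+ = (\bx+\af_c\dx)^\top(\bs+\af_c\ds)$, expanding gives
\[
\bnu\,\mu(z^+) = \bx^\top\bs + \af_c(\bx^\top\ds + \bs^\top\dx) + \af_c^2\,\dx^\top\ds,
\]
and the last term is zero by \eqref{eq:dxds}. It remains to identify $\bx^\top\ds + \bs^\top\dx$ with $-\mu(z)\|\dx\|_{\bx}^2$.

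To compute the cross terms, I would multiply \eqref{eq:eqnSysCorr2} by $\bx^\top$ and by $\dx^\top$ separately. Multiplying by $\bx^\top$ and using $\bH(\bx)\bx = -\bg(\bx)$ (from \eqref{eq:LHBprops}) together with $\bx^\top\bg(\bx) = -\bnu$ and $\bx^\top\bs = \bnu\,\mu(z)$, one gets $\bx^\top\ds = \mu(z)\,\bg(\bx)^\top\dx$ after the two scalar terms cancel. Multiplying \eqref{eq:eqnSysCorr2} by $\dx^\top$ and substituting $\psi(z) = \bs + \mu(z)\bg(\bx)$ yields $\dx^\top\ds + \mu(z)\|\dx\|_{\bx}^2 = -\dx^\top\bs - \mu(z)\,\bg(\bx)^\top\dx$, and invoking \eqref{eq:dxds} gives $\bs^\top\dx = -\mu(z)\|\dx\|_{\bx}^2 - \mu(z)\,\bg(\bx)^\top\dx$. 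Adding the two expressions, the $\mu(z)\,\bg(\bx)^\top\dx$ contributions cancel, leaving $\bx^\top\ds + \bs^\top\dx = -\mu(z)\|\dx\|_{\bx}^2$. Dividing by $\bnu$ and using $\mu(z^+) = \bx^{+\top}\bs^+/\bnu$ produces \eqref{eq:muRedCorr}.

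The main obstacle is simply bookkeeping: ensuring the three uses of logarithmic homogeneity ($\bH(\bx)\bx = -\bg(\bx)$, $\bx^\top\bg(\bx) = -\bnu$, and $\bx^\top\bs = \bnu\,\mu(z)$) combine correctly to eliminate both the $\bg(\bx)$ contribution and the raw $\bx^\top\bs$ term, so that only $-\mu(z)\|\dx\|_{\bx}^2$ survives. Given how directly the identities line up, I do not expect any substantive difficulty beyond this algebra.
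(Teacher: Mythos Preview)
Your proof is correct and follows essentially the same approach as the paper: both derive \eqref{eq:infRedCorr} directly from \eqref{eq:eqnSysCorr1}, and both establish \eqref{eq:muRedCorr} by expanding $\bx^{+\top}\bs^+$, using \eqref{eq:dxds} to kill the quadratic term, and computing $\bx^\top\ds$ and $\dx^\top\bs$ separately from \eqref{eq:eqnSysCorr2} together with the logarithmic-homogeneity identities (in particular $\bx^\top\psi=0$, which is \eqref{eq:psix}) so that the $\bg(\bx)^\top\dx$ contributions cancel. The only difference is cosmetic ordering of the algebra.
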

\begin{proof}
Equation \eqref{eq:infRedCorr} follows immediately from \eqref{eq:eqnSysCorr1}. To prove \eqref{eq:muRedCorr}, let $\mu:=\mu(z)$ and $\mu^+:=\mu(z^+)$. Rearranging \eqref{eq:dxds}, we get
\begin{equation}\label{eq:muChgCorrAux1}
\dx^\top\bs\overset{\eqref{eq:dxds}}{=} \dx^\top\bs+\dx^\top\ds \overset{\eqref{eq:eqnSysCorr2}}{=} \dx^\top\bs-\dx^\top(\psi + \mu\bH(\bx)\dx)= - \mu\left(\dx^\top\bg(\bx) + \|\dx\|_{\bx}^2\right).
\end{equation}
Furthermore, Theorem~\ref{thm:LHBprops} gives
\begin{align}\label{eq:muChgCorrAux2}
\bx^\top\ds \overset{\eqref{eq:eqnSysCorr2}}{=} -\bx^\top(\mu\bH(\bx)\dx + \psi) \overset{\eqref{eq:psix}}{=} - \mu\dx^\top(\bH(\bx)\bx) \overset{\eqref{eq:LHBprops}}{=} \mu\dx^\top\bg(\bx).
\end{align}
Now using \eqref{eq:muChgCorrAux1} and \eqref{eq:muChgCorrAux2}, we obtain
\begin{align*}
\mu^+-\mu
&=\bnu^{-1}\left(\bx^+{}^\top\bs^+ - \bx^\top\bs\right)
\overset{\eqref{eq:dxds}}{=}\af_c\bnu^{-1}\left(\dx^\top\bs + \bx^\top\ds\right)\notag\\
&\hspace{-1em}\overset{(\ref{eq:muChgCorrAux1}-\ref{eq:muChgCorrAux2})}{=}\af_c\bnu^{-1}\mu\left(-\dx^\top\bg(\bx) - \|\dx\|_{\bx}^2 + \dx^\top\bg(\bx)\right) = -\af_c\bnu^{-1}\mu\|\dx\|_{\bx}^2.
\end{align*}
This completes the proof of \eqref{eq:muRedCorr}.
\end{proof}

In particular, equation~\eqref{eq:muRedCorr} shows that the complementarity gap never increases during the corrector phase.

Using \eqref{eq:dxdsBndCorr} and \eqref{eq:muRedCorr}, one can further bound the change in the complementarity gap. The left-hand side inequality in \eqref{eq:muRatioCorr} can also be found in Appendix~4 of \cite{SY2015}.
}


\begin{lemma}[see also Appendix 4 in \cite{SY2015}]\label{lem:muChgCorr}
Let $z=(\bx;y;\bs)\in\cN(\theta)$ for some $0\leq\theta<1$, and let $\dz=(\dx;\dy;\ds)$ be the solution to \eqref{eq:eqnSysCorr}. Choose $z^+=(\bx^+;y^+;\bs^+):=z+\af_c\dz$ for some $0\leq\af_c\leq 1$. Then
\begin{gather}
|\mu(z^+)-\mu(z)|\leq\af_c\bnu^{-1}\theta^2\mu(z),\label{eq:muDiffCorr}\\
\revise{1-\af_c\bnu^{-1}\theta^2\leq\frac{\mu(z^+)}{\mu(z)}\leq 1.}
\label{eq:muRatioCorr}
\end{gather}
\end{lemma}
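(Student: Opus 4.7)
My plan is to derive both displays directly from the identity \eqref{eq:muRedCorr} established in Lemma~\ref{lem:resRedCorr}, combined with the bound $\|\dx\|_{\bx}\leq\theta$ from Lemma~\ref{lem:dxdsBndCorr}. The statement is essentially a corollary of these two results, so no new technical machinery is needed.

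First, I would observe that $z\in\cN(\theta)\subset\cF^\circ$ implies $\bx\in\bK^\circ$ and $\bs\in(\bK^*)^\circ$, so that $\mu(z)=\bx^\top\bs/\bnu>0$. From Lemma~\ref{lem:resRedCorr}, we have the exact identity
\[
\mu(z^+)-\mu(z) = -\af_c\bnu^{-1}\mu(z)\|\dx\|_{\bx}^2.
\]
Since $\af_c\geq 0$, $\bnu>0$, $\mu(z)>0$, and $\|\dx\|_{\bx}^2\geq 0$, the right-hand side is nonpositive, which immediately yields $\mu(z^+)/\mu(z)\leq 1$, i.e.\ the right inequality in \eqref{eq:muRatioCorr}.

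Next, I would substitute the bound $\|\dx\|_{\bx}^2\leq\theta^2$ from Lemma~\ref{lem:dxdsBndCorr} into the absolute value of the identity above to get
\[
|\mu(z^+)-\mu(z)| = \af_c\bnu^{-1}\mu(z)\|\dx\|_{\bx}^2 \leq \af_c\bnu^{-1}\theta^2\mu(z),
\]
which establishes \eqref{eq:muDiffCorr}. Finally, dividing the identity by $\mu(z)>0$ and applying the same bound on $\|\dx\|_{\bx}^2$ gives
\[
\frac{\mu(z^+)}{\mu(z)} = 1 - \af_c\bnu^{-1}\|\dx\|_{\bx}^2 \geq 1 - \af_c\bnu^{-1}\theta^2,
\]
which is the left inequality in \eqref{eq:muRatioCorr}. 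There is no real obstacle to this proof: every step is a one-line substitution from a previously established lemma, and the hypotheses $0\leq\af_c\leq 1$ and $0\leq\theta<1$ are only needed insofar as they guarantee the signs used above (and via Lemma~\ref{lem:dxdsBndCorr} for the bound on $\|\dx\|_{\bx}$).
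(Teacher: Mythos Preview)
Your proposal is correct and follows precisely the approach the paper indicates: the text preceding Lemma~\ref{lem:muChgCorr} states that the bounds follow from \eqref{eq:dxdsBndCorr} and \eqref{eq:muRedCorr}, and this is exactly what you do. There is nothing to add.
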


The analysis in \cite{SY2015} does not provide any details why the solution after a corrector step belongs to $\cF^\circ$. We provide a proof of this for completeness:

\begin{lemma}\label{lem:inCorr}
Let $z=(\bx;y;\bs)\in\cN(\theta)$ for some $0\leq\theta<1$, and let $\dz=(\dx;\dy;\ds)$ be the solution to \eqref{eq:eqnSysCorr}. Choose $z^+=(\bx^+;y^+;\bs^+):=z+\af_c\dz$ for some $0\leq\af_c\leq 1$. Then $z^+\in\cF^\circ$.
\end{lemma}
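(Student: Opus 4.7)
The plan is to mirror the two-part structure of Lemma~\ref{lem:inPred}: prove $\bx^+\in\bK^\circ$ via a direct Dikin ellipsoid argument for $\bF$, and prove $\bs^+\in(\bK^*)^\circ$ by recasting the already-bounded residual \eqref{eq:psiWithNewSBnd} into a Dikin-ball containment for the conjugate barrier $\bF^*$. The trivial component $y^+\in\R^m$ needs no comment.

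For the primal inclusion, I would invoke Lemma~\ref{lem:dxdsBndCorr} to get $\|\dx\|_{\bx}\leq\theta$, so that
\[
\|\bx^+-\bx\|_{\bx}=\af_c\|\dx\|_{\bx}\leq\af_c\theta\leq\theta<1.
\]
Self-concordance of $\bF$ then guarantees $\bx^+\in B_{\bx}(\bx,1)\subset\bK^\circ$. This step is routine.

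The dual inclusion is where the only real idea is required. The key observation is that the quantity bounded by \eqref{eq:psiWithNewSBnd} can be rewritten so that $\bs^+$ appears explicitly, with $-\bg(\bx)$ (not $-\bg(\bx^+)$) as the natural reference point:
\[
\psi(z)+\af_c\ds=\bs+\mu(z)\bg(\bx)+\af_c\ds=\bs^++\mu(z)\bg(\bx).
\]
Hence \eqref{eq:psiWithNewSBnd} gives $\|\bs^+/\mu(z)-(-\bg(\bx))\|_{\bx}^*\leq\theta$. By \eqref{eq:dNorms}, the $\bx$-local norm equals the local norm of $\bF^*$ at $-\bg(\bx)$, so the same inequality says that $\bs^+/\mu(z)$ lies in the Dikin ball of $\bF^*$ centered at $-\bg(\bx)\in(\bK^*)^\circ$ (cf.\ Theorems~\ref{thm:SCconj} and~\ref{thm:SCsymm}(i)) with radius $\theta<1$. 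Self-concordance of $\bF^*$ then places this ball inside $(\bK^*)^\circ$, giving $\bs^+/\mu(z)\in(\bK^*)^\circ$; since $\mu(z)>0$ and $(\bK^*)^\circ$ is a cone, $\bs^+\in(\bK^*)^\circ$ follows.

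The main obstacle, and the only nontrivial step, is spotting the rewriting of $\psi(z)+\af_c\ds$ so that the corrector analogue of the Lemma~\ref{lem:inPred}(ii) argument becomes visible. The minor technical point of $\mu(z)>0$ is immediate from $\bx\in\bK^\circ$, $\bs\in(\bK^*)^\circ$, and the standard fact that a primal and dual interior pair has strictly positive inner product. Once these pieces are in place, no further estimates (and in particular, no reliance on $\af_c$ being small or on $\mu^+$) are needed.
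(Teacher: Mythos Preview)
Your proof is correct and essentially identical to the paper's: the primal inclusion via $\|\bx^+-\bx\|_{\bx}=\af_c\|\dx\|_{\bx}\leq\af_c\theta<1$ and self-concordance of $\bF$, and the dual inclusion via the rewriting $\psi(z)+\af_c\ds=\bs^++\mu(z)\bg(\bx)$ together with \eqref{eq:psiWithNewSBnd}, \eqref{eq:dNorms}, and self-concordance of $\bF^*$ at $-\bg(\bx)$, are exactly the steps the paper takes. The only cosmetic difference is that you spell out the positive-scaling invariance of $(\bK^*)^\circ$ to pass from $\bs^+/\mu(z)$ to $\bs^+$, whereas the paper leaves that implicit.
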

\begin{proof}
To see $\bx^+\in\bK^\circ$, note using \eqref{eq:dxdsBndCorr} that $\|\bx^+-\bx\|_{\bx}=\af_c\|\dx\|_{\bx}\leq\af_c\theta<1$. The result now follows from $\bx\in\bK^\circ$ and the self-concordance of $\bF$. To prove $\bs^+\in(\bK^*)^\circ$, let $\mu:=\mu(z)$, $\mu^+:=\mu(z^+)$, and $\psi^+:=\psi(\bx^+,\bs^+,\mu^+)=\bs^+ +\mu^+\bg(\bx^+)$. Recall from Theorem~\ref{thm:SCsymm}(i) that $\bx\in\bK^\circ$ implies $-\bg(\bx)\in(\bK^*)^\circ$. Furthermore, $\mu>0$ because $\bx\in\bK^\circ$ and $\bs\in(\bK^*)^\circ$. Using this, we get
\[
\|\bs^+/\mu + \bg(\bx)\|_{-\bg(\bx)}^* \overset{\eqref{eq:dNorms}}{=} \|\bs^+/\mu + \bg(\bx)\|_{\bx}^*
= \mu^{-1}\|\psi+\af_c \ds\|_{\bx}^* \overset{\eqref{eq:psiWithNewSBnd}}{\leq}\theta<1.
\]
The conclusion $\bs^+\in(\bK^*)^\circ$ now follows from the last inequality together with $-\bg(\bx)\in(\bK^*)^\circ$ and the self-concordance of $\bF^*$.
\end{proof}

The next proposition is the main result of this section.

\begin{proposition}\label{prop:psiBndCorr}
Let $z=(\bx;y;\bs)\in\cN(\theta)$ for some $0\leq\theta<1$, and let $\dz=(\dx;\dy;\ds)$ be the solution to \eqref{eq:eqnSysCorr}. Choose $z^+=(\bx^+;y^+;\bs^+):=z+\af_c\dz$ for some $0\leq\af_c\leq 1$. Then
\begin{equation}\label{eq:psiBndCorr}
\mu(z^+)^{-1}\|\psi(z^+)\|_{\bx^+}^*\leq(2-\af_c\theta^2)^{-1}\left(2\left(\frac{\af_c\theta}{1-\af_c\theta}\right)^2+\frac{4(1-\af_c)\theta}{1-\af_c\theta} +\sqrt{2}\af_c\theta^2\right).
\end{equation}
\end{proposition}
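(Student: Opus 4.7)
The plan is to prove the bound by direct computation, following the classical strategy for analyzing a damped Newton iteration but accounting for three additional effects specific to the corrector step: the update of $\bs$, the change in $\mu$, and the need to change base from the norm at $\bx$ to the norm at $\bx^+$.

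First I would introduce shorthand $\mu:=\mu(z)$, $\mu^+:=\mu(z^+)$, $\psi:=\psi(\bx,\bs,\mu)$, and $\psi^+:=\psi(\bx^+,\bs^+,\mu^+)$, and note that $z^+\in\cF^\circ$ by Lemma~\ref{lem:inCorr}, so all norms involving $\bx^+$ are well-defined. The key algebraic step is the decomposition
\[
\psi^+=(1-\af_c)\psi+\af_c\mu\int_0^1\bigl[\bH(\bx+t\af_c\dx)-\bH(\bx)\bigr]\dx\,dt+(\mu^+-\mu)\bg(\bx^+),
\]
obtained by substituting $\bs^+=\bs+\af_c\ds$, using the corrector equation~\eqref{eq:eqnSysCorr2} to rewrite $\psi+\af_c\ds=(1-\af_c)\psi-\af_c\mu\bH(\bx)\dx$, and expanding $\bg(\bx^+)-\bg(\bx)$ via the fundamental theorem of calculus. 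This mirrors the decomposition underlying Theorem~\ref{thm:renegarNew}, with $\psi/\mu$ playing the role of the gradient of the shifted barrier $\bF(\bx)+\bs^\top\bx/\mu$, but now with an additional $(\mu^+-\mu)\bg(\bx^+)$ term that has no analogue in the pure damped Newton setting.

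Next I would apply the triangle inequality in the $\|\cdot\|_{\bx^+}^*$ norm and bound the three resulting pieces. For $(1-\af_c)\|\psi\|_{\bx^+}^*$, I change base back to $\bx$ via Lemma~\ref{lem:dNormBaseChg}, then use $\|\psi\|_{\bx}^*\leq\theta\mu$ and $\|\dx\|_{\bx}\leq\theta$ from Lemma~\ref{lem:dxdsBndCorr} to obtain $(1-\af_c)\theta\mu/(1-\af_c\theta)$. For the Taylor residual integral, I again change base to $\bx$ and then apply~\eqref{eq:lemma1i}, the self-concordance identity~\eqref{eq:SCequi2}, and the scalar computation $\int_0^1\bigl[(1-t\af_c\theta)^{-2}-1\bigr]dt=\af_c\theta/(1-\af_c\theta)$, which yields $\mu\bigl(\af_c\theta/(1-\af_c\theta)\bigr)^2$. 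For the $\mu$-change term I combine~\eqref{eq:muDiffCorr} with the identity $\|\bg(\bx^+)\|_{\bx^+}^*=\sqrt{\bnu}$ from~\eqref{eq:LHBprops} and the lower bound $\bnu\geq 2$ to obtain $\af_c\theta^2\mu/\sqrt{2}$.

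Finally, I would divide the resulting estimate on $\|\psi^+\|_{\bx^+}^*$ by $\mu^+$. Since~\eqref{eq:muRatioCorr} together with $\bnu\geq 2$ gives $\mu/\mu^+\leq 2/(2-\af_c\theta^2)$, multiplying through produces exactly the claimed bound~\eqref{eq:psiBndCorr}. The main obstacle is performing the base changes carefully: without translating the $\|\cdot\|_{\bx^+}^*$ norm back to the $\|\cdot\|_{\bx}^*$ norm, one cannot invoke self-concordance in the sharp form needed to recover the quadratic factor $(\af_c\theta/(1-\af_c\theta))^2$ from the Taylor residual, and consequently the damped Newton contraction hidden in Theorem~\ref{thm:renegarNew} is lost. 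Everything else is bookkeeping with the identities already collected in Section~\ref{sec:pre}.
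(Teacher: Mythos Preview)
Your argument is correct, and in fact yields a bound slightly sharper than~\eqref{eq:psiBndCorr}: carrying your three estimates through gives coefficient $2$ rather than $4$ in front of the $(1-\af_c)\theta/(1-\af_c\theta)$ term, so the proposition follows a fortiori (your claim that the computation reproduces~\eqref{eq:psiBndCorr} \emph{exactly} is thus a small overstatement). The paper's proof uses the same ingredients---the self-concordance Taylor estimate, Lemma~\ref{lem:dNormBaseChg} for the base change, and Lemmas~\ref{lem:dxdsBndCorr}--\ref{lem:muChgCorr}---but organizes them differently: it invokes Theorem~\ref{thm:renegarNew} as a black box applied to the auxiliary barrier $f(v)=\mu^{-1}(\bs+\ds)^\top v+\bF(v)$, whose Newton step at $\bx$ is exactly $\dx$, and thereby bounds $\mu^{-1}\|\bs+\ds+\mu\bg(\bx^+)\|_{\bx^+}^*$; it must then subtract off and separately bound a $(1-\af_c)\|\ds\|_{\bx^+}^*$ correction, which is precisely where the extra factor of $2$ enters. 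Your direct three-term decomposition of $\psi^+$ essentially inlines the proof of Theorem~\ref{thm:renegarNew} and avoids that detour, at the modest cost of redoing the integral computation the theorem already packages.
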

\begin{proof}
Let $\mu:=\mu(z)$, $\mu^+:=\mu(z^+)$, $\psi:=\psi(\bx,\bs,\mu)=\bs +\mu\bg(\bx)$, and $\psi^+:=\psi(\bx^+,\bs^+,\mu^+)=\bs^+ +\mu^+\bg(\bx^+)$. Note that $\mu>0$ because $\bx\in\bK^\circ$ and $\bs\in(\bK^*)^\circ$. Lemma~\ref{lem:inCorr} shows that $\bx^+\in\bK^\circ$ under the hypotheses of this proposition. We claim that
\begin{equation}\label{eq:nxpBnd}
\mu^{-1}\|\bs+\ds+\mu \bg(\bx^+)\|_{\bx^+}^*\leq
\left(\frac{\af_c\theta}{1-\af_c\theta}\right)^2+\frac{(1-\af_c)\theta}{1-\af_c\theta}.
\end{equation}
For the proof of the claim, consider the function $f:\bK^\circ\to\R$ defined as $f(v):=\mu^{-1}(\bs+\ds)^\top v + \bF(v)$. Note that $f$ is self-concordant and its Hessian is $\bH$. We denote the Newton step for $f$ at a point $v\in\bK^\circ$ with $n_f(v):=-\mu^{-1}\bH(v)^{-1}(\bs+\ds+\mu\bg(v))$. Note that
\begin{equation}\label{eq:nx}
n_f(\bx)=-\mu^{-1}\bH(\bx)^{-1}(\bs+\ds+\mu\bg(\bx))=-\mu^{-1}\bH(\bx)^{-1}(\psi+\ds)\overset{\eqref{eq:eqnSysCorr2}}=\dx,
\end{equation}
and therefore, $\bx+\af_c n_f(\bx)=\bx^+$. Furthermore, for any $v\in\bK^\circ$, we have
\begin{equation}\label{eq:nvNorm}
\|n_f(v)\|_v = \|\mu^{-1}\bH(v)^{-1}(\bs+\ds+\mu \bg(v))\|_v = \mu^{-1}\|\bs+\ds+\mu \bg(v)\|_v^*,
\end{equation}
and for $v=\bx$, we have
\begin{equation}\label{eq:nxNorm}
\|n_f(\bx)\|_{\bx}\overset{\eqref{eq:nx}}=\|\dx\|_{\bx}\overset{\eqref{eq:dxdsBndCorr}}\leq\theta.
\end{equation}
Given $\af_c\|n_f(\bx)\|_{\bx}\leq\af_c\theta<1$, we can conclude from Theorem~\ref{thm:renegarNew} that
\[
\mu^{-1}\|\bs+\ds+\mu \bg(\bx^+)\|_{\bx^+}^*
\overset{\eqref{eq:nvNorm}}{=}\|n_f(\bx^+)\|_{\bx^+}
\overset{\eqref{eq:renegarNew}}\leq
\left(\frac{\af_c\|n_f(\bx)\|_{\bx}}{1-\af_c\|n_f(\bx)\|_{\bx}}\right)^2
+\frac{(1-\af_c)\|n_f(\bx)\|_{\bx}}{1-\af_c\|n_f(\bx)\|_{\bx}}.
\]
Note that the right-hand side of this chain is monotone nondecreasing in $\|n_f(\bx)\|_{\bx}$ for $0\leq\af_c\leq 1$. Using \eqref{eq:nxNorm} to bound this right-hand side gives \eqref{eq:nxpBnd}.

To prove \eqref{eq:psiBndCorr}, we first use Theorem~\ref{thm:LHBprops} and the triangle inequality to obtain
\begin{align*}
(1/\mu^+)\|\psi^+\|_{\bx^+}^*
&= \frac{\mu}{\mu^+}\mu^{-1}\|\bs+\af_c \ds+\mu^+ \bg(\bx^+)\|_{\bx^+}^*\\
&\leq \frac{\mu}{\mu^+}\mu^{-1}\|\bs+\af_c \ds+\mu \bg(\bx^+)\|_{\bx^+}^* + \frac{|\mu^+-\mu|}{\mu^+}\|\bg(\bx^+)\|_{\bx^+}^*\\
&\leq \frac{\mu}{\mu^+}\mu^{-1}\left(\|\bs+ \ds+\mu \bg(\bx^+)\|_{\bx^+}^* +(1-\af_c)\| \ds\|_{\bx^+}^*\right)+ \frac{|\mu^+-\mu|}{\mu^+}\|\bg(\bx^+)\|_{\bx^+}^*\\
&\hspace{-0.5em}\overset{(\ref{eq:LHBprops},\ref{eq:nxpBnd})}{=}
\frac{\mu}{\mu^+}\left(\left(\frac{\af_c\theta}{1-\af_c\theta}\right)^2
+\frac{(1-\af_c)\theta}{1-\af_c\theta}+(1-\af_c)\mu^{-1}\|\ds\|_{\bx^+}^*\right) + \frac{|\mu^+-\mu|}{\mu^+}\sqrt{\bnu}.
\end{align*}
We can now use Lemmas~\ref{lem:dNormBaseChg}, \ref{lem:dxdsBndCorr}, and \ref{lem:muChgCorr} to bound the right-hand side of this chain and get
\begin{align*}
(1/\mu^+)\|\psi^+\|_{\bx^+}^*
&\hspace{-0.5em}\overset{(\ref{eq:dNormBaseChg},\ref{eq:dxdsBndCorr})}{\leq}
\frac{\mu}{\mu^+}\left(\left(\frac{\af_c\theta}{1-\af_c\theta}\right)^2
+\frac{(1-\af_c)(\theta+\mu^{-1}\|\ds\|_{\bx}^*)}{1-\af_c\theta}\right)
+ \frac{|\mu^+-\mu|}{\mu^+}\sqrt{\bnu}\\
&\overset{\eqref{eq:dxdsBndCorr}}{\leq}
\frac{\mu}{\mu^+}\left(\left(\frac{\af_c\theta}{1-\af_c\theta}\right)^2
+\frac{2(1-\af_c)\theta}{1-\af_c\theta}\right) + \frac{|\mu^+-\mu|}{\mu^+}\sqrt{\bnu}\\
&\hspace{-0.75em}\overset{(\ref{eq:muDiffCorr}-\ref{eq:muRatioCorr})}\leq
(1-\af_c\bnu^{-1}\theta^2)^{-1}\left(\left(\frac{\af_c\theta}{1-\af_c\theta}\right)^2+\frac{2(1-\af_c)\theta}{1-\af_c\theta} + \frac{\af_c\theta^2}{\sqrt{\bnu}}\right).
\end{align*}
Recall that $\bnu\geq 2$. Observing that the right-hand side of this chain is monotone nonincreasing in $\bnu$ for $\bnu\geq 2$ and replacing $\bnu$ with $2$ yields \eqref{eq:psiBndCorr}.
\end{proof}

Repeated application of the next corollary $r_c$ times implies Lemma~\ref{lem:newLemma6} stated at the beginning of Section~\ref{sec:algo}.

\begin{corollary}
Assume one of the following:
\begin{itemize}
\item $\theta\leq 0.20$, $\epsilon=0.50$, and $\af_c=1$, or
\item $\theta\leq 0.25$, $\epsilon=0.70$, and $\af_c=1$.
\end{itemize}
Let $z=(\bx;y;\bs)\in\cN(\theta)$, and let $\dz=(\dx;\dy;\ds)$ be the solution to \eqref{eq:eqnSysCorr}. Then $z^+=(\bx^+;y^+;\bs^+):=z+\af_c\dz\in\cN(\epsilon\theta)$.
\end{corollary}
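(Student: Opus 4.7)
The plan is to apply Proposition~\ref{prop:psiBndCorr} with $\af_c = 1$ and verify that the right-hand side of \eqref{eq:psiBndCorr} is bounded above by $\epsilon\theta$ throughout the allowed range of $\theta$. First, I would invoke Lemma~\ref{lem:inCorr} to obtain $z^+ \in \cF^\circ$, so all that remains is to establish the norm inequality in the definition of $\cN(\epsilon\theta)$, namely $\mu(z^+)^{-1}\|\psi(z^+)\|_{\bx^+}^* \leq \epsilon\theta$.

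Setting $\af_c = 1$ in \eqref{eq:psiBndCorr} causes the middle $(1-\af_c)$-term to vanish, reducing the Proposition's estimate to
\[
\mu(z^+)^{-1}\|\psi(z^+)\|_{\bx^+}^* \;\leq\; \frac{1}{2-\theta^2}\left(\frac{2\theta^2}{(1-\theta)^2} + \sqrt{2}\,\theta^2\right) \;=:\; f(\theta).
\]
The corollary is then equivalent to showing $f(\theta)/\theta \leq \epsilon$ for every $\theta \in [0,\bar\theta]$, where $\bar\theta \in \{0.20, 0.25\}$ depending on the case. Since
\[
\frac{f(\theta)}{\theta} \;=\; \frac{1}{2-\theta^2}\left(\frac{2\theta}{(1-\theta)^2} + \sqrt{2}\,\theta\right)
\]
is the product of two nonnegative nondecreasing functions on $[0,1)$ (the factor $1/(2-\theta^2)$ is increasing because $2-\theta^2$ is decreasing, and the parenthesized factor is manifestly increasing), this ratio is itself nondecreasing on $[0,1)$. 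Hence it suffices to verify the bound at the single value $\theta = \bar\theta$.

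The final step is a routine numerical check at the two endpoints: plugging in $\theta = 0.20$ yields $f(0.20)/0.20 \approx 0.463 \leq 0.50$, and plugging in $\theta = 0.25$ yields $f(0.25)/0.25 \approx 0.641 \leq 0.70$, both with comfortable margins. No real obstacle arises; the proof is a clean application of Proposition~\ref{prop:psiBndCorr} combined with the monotonicity observation and the two arithmetic verifications, and it requires no additional self-concordance machinery beyond what has already been assembled.
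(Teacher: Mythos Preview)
Your proposal is correct and follows essentially the same approach as the paper: invoke Lemma~\ref{lem:inCorr} for $z^+\in\cF^\circ$ and Proposition~\ref{prop:psiBndCorr} for the norm bound, then check the numerical inequality. The paper's proof simply asserts that the required inequality ``can be verified'' from these two results, whereas you supply the explicit verification, including the monotonicity argument that reduces the check over the interval $[0,\bar\theta]$ to the single endpoint $\theta=\bar\theta$; your numerical evaluations at $\theta=0.20$ and $\theta=0.25$ are accurate.
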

\begin{proof}
It can be verified from Lemma~\ref{lem:inCorr} and Proposition~\ref{prop:psiBndCorr} that $z^+\in\cF^\circ$ and $\|\psi(\bx^+,\bs^+,\mu(z^+))\|_{\bx^+}^*\allowbreak\leq\epsilon\theta\mu(z^+)$ with the prescribed parameters.
\end{proof}

\subsection{Complexity}

With the above analysis of the predictor and the corrector steps, establishing the iteration complexity of the algorithm (stated precisely as Theorem 1 in \cite{SY2015}) is straightforward; we provide a sketch of the argument for completeness, following Appendix 5 in \cite{SY2015}.

\revise{
It is easy to show via Lemmas~\ref{lem:resRedPred} and \ref{lem:resRedCorr} that the feasibility gap $\|G(y;\bx)-(0;\bs)\|$ and the complementarity gap $\mu(z)$ decrease geometrically. Using a predictor step size $\af_p$, the feasibility gap decreases by a factor of $(1-\af_p)$ in each predictor step (see \eqref{eq:infRedPred}) and does not change in the corrector steps (see \eqref{eq:infRedCorr}). Similarly, the complementarity gap $\mu(z)$ decreases by a factor of $\left(1-\af_p\right)\left(1-\af_p\eta\kx\bnu^{-1}\right)$ in each predictor step (see \eqref{eq:muRatioPred}) and does not increase in the corrector steps (see \eqref{eq:muRatioCorr}).

With the parameter choices suggested in Lemma~\ref{lem:newLemma5}, the predictor step size $\af_p$ is of order $\Omega(1/\sqrt{\nu})$, and hence both the feasibility gap and the complementarity gap are reduced by a factor of $O(1-1/\sqrt{\nu})$ in each iteration. As a result, both gaps are reduced to $\varepsilon$ times their initial values in $O(\sqrt{\nu}\log(1/\varepsilon))$ iterations.
}

\bibliographystyle{abbrv}
\bibliography{ref}

\appendix

\section{Omitted Proofs}

\begin{proof}[Proof of Theorem~\ref{thm:renegarNew}]
First note that
\begin{align}
\|n(x^+)\|_{x^+}^2
&=\|H(x^+)^{-1}g(x^+)\|_{x^+}^2\notag\\
&=\la g(x^+),H(x)^{-1}H(x^+)^{-1}g(x^+)\ra_{H(x)}\notag\\
&=\la H(x)^{-1}g(x^+),H(x^+)^{-1}H(x)H(x)^{-1}g(x^+)\ra_{H(x)}\notag\\
&\overset{\eqref{eq:lemma1ii}}{\leq}\|H(x^+)^{-1}H(x)\|_x\|H(x)^{-1}g(x^+)\|_x^2\notag\\
&=\|H(x^+)^{-1}H(x)\|_x(\|g(x^+)\|_x^*)^2.\label{eq:renegarNewPrf1}
\end{align}
Given $\|x^+-x\|_x=\af\|n(x)\|_x<1$, we conclude from Theorem~\ref{thm:SCequi} that
\[
\|H(x^+)^{-1}H(x)\|_x\overset{\eqref{eq:SCequi1}}{\leq}\frac{1}{(1-\af\|n(x)\|_x)^2}.
\]
Combining this with \eqref{eq:renegarNewPrf1} yields
\begin{equation}\label{eq:renegarNewPart1}
\|n(x^+)\|_{x^+}\leq\frac{\|g(x^+)\|_x^*}{1-\af\|n(x)\|_x}.
\end{equation}
To finish the proof, we prove an upper bound on $\|g(x^+)\|_x^*$. Our proposition will follow from this upper bound and the inequality \eqref{eq:renegarNewPart1} above. Note that the fundamental theorem of calculus for the gradient (see Proposition 1.5.7 in \cite{Ren2001}) implies
\[
g(x^+)-g(x)-\af H(x)n(x)=\int_0^1\left(H(x+t\af n(x))-H(x)\right)\af n(x)dt.
\]
Using this together with $n(x)=-H(x)^{-1}g(x)$ and the triangle inequality, we get
\begin{align}
\|g(x^+)\|_x^*&=\|g(x^+)-g(x)-H(x)n(x)\|_x^*\notag\\
&\leq\|g(x^+)-g(x)-\af H(x)n(x)\|_x^*+(1-\af)\|H(x)n(x)\|_x^*\notag\\
&=\left\|\int_0^1(H(x+t\af n(x))-H(x))\af n(x)dt\right\|_x^*+(1-\af)\|H(x)n(x)\|_x^*\notag\\
&\leq\int_0^1\left\|\big(H(x+t\af n(x))-H(x)\big)\af n(x)\right\|_x^* dt+(1-\af)\|H(x)n(x)\|_x^*\notag\\
&=\int_0^1\left\|\big(H(x)^{-1}H(x+t\af n(x))-I\big)\af n(x)\right\|_x dt+(1-\af)\|n(x)\|_x.\label{eq:renegarNewPrf2}
\end{align}
For every $0\leq t\leq 1$, Theorem~\ref{thm:SCequi} indicates
\begin{align}
\left\|\big(H(x)^{-1}H(x+t\af n(x))-I\big)\af n(x)\right\|_x&\overset{\eqref{eq:lemma1i}}{\leq} \af\|n(x)\|_x\|H(x)^{-1}H(x+t\af n(x))-I\|_x\notag\\
&\overset{\eqref{eq:SCequi2}}{\leq}\af\|n(x)\|_x\left(\frac{1}{(1-t\af\|n(x)\|_x)^2}-1\right).\label{eq:renegarNewPrf3}
\end{align}
Putting \eqref{eq:renegarNewPrf2} and \eqref{eq:renegarNewPrf3} together, we obtain
\begin{align*}
\|g(x^+)\|_x^*&
\overset{\eqref{eq:renegarNewPrf2}}{\leq}\int_0^1\|(H(x)^{-1}H(x+t\af n(x))-I)\af n(x)\|_x dt+(1-\af)\|n(x)\|_x\\
&\overset{\eqref{eq:renegarNewPrf3}}{\leq}\af\|n(x)\|_x\int_0^1\left(\frac{1}{(1-t\af\|n(x)\|_x)^2}-1\right)dt+(1-\af)\|n(x)\|_x\\
&\leq\frac{\af^2\|n(x)\|_x^2}{1-\af\|n(x)\|_x}+(1-\af)\|n(x)\|_x.
\end{align*}
The last inequality together with \eqref{eq:renegarNewPart1} yields our theorem.
\end{proof}

\section{Counterexamples}

In this appendix, we consider the following primal-dual pair of linear programming problems:
\vspace{-1.5em}
\[
\begin{minipage}[t]{0.45\linewidth}\centering
\begin{alignat*}{2}
\text{(LP-P):}\;    &\min_x\;           &&2x_1+3x_2\\
                    &\;\text{s.t.}\;    &&5x_1-3x_2=12\\
                    &                   &&x\in\R^2_+
\end{alignat*}
\end{minipage}
\begin{minipage}[t]{0.45\linewidth}\centering
\begin{alignat*}{2}
\text{(LP-D):}\;    &\max_{s,y}\;       &&12y\\
                    &\;\text{s.t.}\;    &&5y+s_1=2\\
                    &                   &-&3y+s_2=3\\
                    &                   &&s\in\R^2_+,\; y\in\R
\end{alignat*}
\end{minipage}
\]
In the notation of this paper, we have $A=(5,-3)$, $c=(2;3)$, and $b=12$. The standard logarithmic barrier $F(x)=-\log x_1-\log x_2$ for $K=\R^2_+$ has barrier parameter $\nu=2$. We assume $\beta=0.30$ and $\eta=0.50\beta$ in the remainder of this appendix. We also set $\bnu=\nu+1$, $\kx=\eta+\sqrt{2\eta^2+\bnu}$, and $\ks=\kx+\sqrt{\kx^2+\eta^2+\bnu}$ as described in Section~\ref{sec:algo}.\footnote{In equation (34) of \cite{SY2015}, Skajaa and Ye have $\kx=\eta+\sqrt{\eta^2+\bnu}$ in contrast with the definition used in this note. However, given the developments that precede the definition of $\kx$ in \cite{SY2015}, we believe that this difference is due to a typographical error. The counterexamples stated in this appendix remain valid for both definitions.} Let $\af_p=0.052$ and $\af_c=1/84$. This $\af_p$ value satisfies all of the three conditions stated in Appendix 3 of \cite{SY2015}:
\[
\af_p\leq\kx^{-1},\qquad \af_p\leq (1-\eta)\ks^{-1},\quad\text{and}\quad \af_p\leq (11\sqrt{\bnu})^{-1}.
\]
Furthermore, the $\af_c$ value satisfies the condition $\af_c\leq 1/84$ stated in Appendix 4 of \cite{SY2015}.

We first consider the proof of Lemma~5 presented in \cite{SY2015}. Let $z=(\bx;y;\bs)\in\cN(\eta)$, let $\dz=(\dx;\dy;\ds)$ be the corresponding solution to \eqref{eq:eqnSysPred}, and define $z^+=(\bx^+;y^+;\bs^+):=z+\af_p\dz$. Then inequality (37) of \cite{SY2015} claims that if $q:=\mu(z^+)^{-1}\|\bs^+ +\mu(z^+)\bg(\bx)\|_{\bx}^*<1$, then
\begin{equation}\label{eq:3747}
\|\psi(\bx^+,\bs^+,\mu(z^+))\|_{\bx^+}^*\leq\frac{\mu(z^+)q^2}{(1-q)^2}.
\end{equation}
However, this inequality clearly does not hold for the solution $x=(0.9310;0.6995)$, $\tau=0.8511$, $y=0.0224$, $s=(0.8246;1.0891)$, and $\kappa=0.9023$, which satisfies $z\in\cN(\eta)$. The main problem in the proof is the incorrect use of Theorem~\ref{thm:renegarOrg}: here $\bx^+$ is the result of a dampened Newton step, while Theorem~\ref{thm:renegarOrg} applies to a full Newton step.

Now we consider the proof of Lemma~6 from \cite{SY2015}. Let $z=(\bx;y;\bs)\in\cN(\beta)$, let $\dz=(\dx;\dy;\ds)$ be the corresponding solution to \eqref{eq:eqnSysCorr}, and define $z^+=(\bx^+;y^+;\bs^+):=z+\af_c\dz$. Then inequality (47) of \cite{SY2015} claims that a similar result to the one above is true for all such choices of $z$: If $q<1$, then \eqref{eq:3747} must be satisfied. However, this inequality does not hold for the solution $x=(0.9830;0.9304)$, $\tau=0.9670$, $y=0.0042$, $s=(0.9650;1.0176)$, and $\kappa=0.9810$, which satisfies $z\in\cN(\beta)$. The main problem is again the incorrect use of Theorem~\ref{thm:renegarOrg}.

\end{document}